\newtheorem{theorem}{Theorem}[section]
\newtheorem{lemma}[theorem]{Lemma}
\newtheorem{corollary}[theorem]{Corollary}
\newtheorem{rem}[theorem]{Remark}
\newtheorem{fact}[theorem]{Fact}
\newcommand{\R}{\mathbb{R}}
\newcommand{\Z}{\mathbb{Z}}
\newcommand{\E}{\mathbb{E}}
\newcommand{\B}{\mathbb{B}}
\newcommand{\equnn}[1]{\begin{eqnarray*} #1 \end{eqnarray*}}
\newcommand{\equa}[1]{\begin{eqnarray} #1 \end{eqnarray}}
\newcommand{\ind}[1]{\mathbbm{1}_{\left[ {#1} \right] }}
\newcommand{\ee}{\mathbf e}
\newcommand{\bfa}{\mathbf a}
\newcommand{\bfq}{\mathbf q}
\newcommand{\tildeE}{\tilde{\mathbb E}}
\newcommand{\bgamma}{\boldsymbol\gamma}
\newcommand{\calA}{\mathcal A}
\newcommand{\calE}{\mathcal E}
\newcommand{\calH}{\mathcal H}
\newcommand{\dd}{d}
\newcommand{\Vare}{\mathbb V\!\mbox{ar}}
\newcommand{\Cove}{\mathbb C\mbox{ov}}
\title{A note on the discrete Gaussian Free Field \\ with disordered pinning on
  $\Z^d$, $d\geq2$}
\author{Loren Coquille, Piotr Mi\l{}o\'s}
\date{\today}
\begin{document}
\maketitle
% \hrule
% \strut

\begin{abstract}
We study the discrete massless Gaussian Free Field on $\Z^d$, $d\geq2$,
in the presence of a disordered square-well potential supported on a finite strip
around zero. The disorder is introduced by reward/penalty
interaction coefficients, which are given by i.i.d. random variables. Under
minimal assumptions on the law of the environment, we prove that the quenched free energy associated to this model
exists in $\R^+$, is deterministic, and strictly smaller than the
annealed free energy whenever the latter is strictly
positive. 
\end{abstract}
\strut

\noindent \textbf{Keywords : }  Random interfaces, random surfaces, pinning,
disordered systems, Gaussian free field.\\
\textbf{MSC2010 :} 60K35, 82B44, 82B41.
  
 % \tableofcontents

\section{The model}  
We study the discrete Gaussian Free Field with a disordered
square-well potential. For $\Lambda$ a finite subset of $\Z^d$, denoted by $\Lambda\Subset\Z^d$, let $\varphi=(\varphi_x)_{x\in\Lambda}$
represent the heights over sites of
$\Lambda$. The values of $\varphi_x$ can also be seen as continuous
unbounded (spin) variables, we will refer to $\varphi$ as ``the
interface'' or ``the field''.
 
Let $\Omega=\R^{\Z^d}$ be the set of configurations. The finite volume Gibbs measure in $\Lambda$ for the discrete Gaussian Free Field with disordered square-well potential,
and $0$ boundary conditions, is the probability measure on $\Omega$
defined by :  
\equa{\label{astrip_measure}
\mu_{\Lambda}^{\ee}(d\varphi)=\frac1{Z_{\Lambda}^{\ee}}\exp\left({-\beta
    \calH_\Lambda(\varphi)+\beta\sum_{x\in\Lambda}
 (b\cdot e_x +h) \ind{\varphi_x\in[-a,a]}}\right)\prod_{x\in\Lambda}d\varphi_x\prod_{y\in\Lambda^c}\delta_0(d\varphi_y).}
where $a,\beta,b >0 $, $h\in\R$ and $\calH_\Lambda(\varphi)$ is given by 
\begin{equation}
	\calH_\Lambda(\varphi)=\frac1{4d}\sum_{\substack{\{x,y\}\cap\Lambda\neq\varnothing
	    \\ x\sim y}}(\varphi_x-\varphi_y)^2,\label{eq:gffHamiltonian}
\end{equation}
where $x\sim y$ denotes an edge of the graph $\Z^d$ and $\ind{A}$ denotes the indicator function of $A$. An environment is denoted as
$\ee:=(e_x)_{x\in\Lambda}$. We consider it to be an i.i.d\ family of random
variables such
that $\E(e_x)=0$, $\Vare(e_x)=1$, and 
\begin{equation}
\E(e^{b\cdot e_x+h})<\infty.\label{eq:assumption}
\end{equation}
The parameter $b$ is usually called the ``intensity of the disorder'', while $h$ is its average. We will see that \eqref{eq:assumption} is a minimal condition for the annealed free energy to be well-defined. The disordered potential attracts or repulses the field at heights belonging to $[-a,a]$. 
$Z_{\Lambda}^\ee$ is the partition function, i.e.\ it normalizes $\mu_{\Lambda}^\ee$ so it is a probability measure.
We stress that our model contains two levels of randomness. The first one is $\ee$ which we refer to as ``the environment''. The second one is the actual interface model whose low depends on the realization of $\ee$. 

The inverse temperature parameter $\beta$ enters only in a trivial way. Indeed, if we replace
the field $(\varphi_x)_{x\in\Lambda}$ by
$(\sqrt{\beta}\phi_x)_{x\in\Lambda}$, $a$ by $\sqrt{\beta}a$, and
$(b\cdot e_x+h)_{x\in\Lambda}$ by
$({\beta}(b\cdot e_x+h))_{x\in\Lambda}$ we have transformed the model to
temperature parameter $\beta=1$. In the sequel, we will therefore work
with $\beta=1$.

In this paper we focus on $d\geq2$ since 1-dimensional
models have been well-studied in the last decade
(see Section \ref{1d} below). The questions to address are the usual ones
concerning statistical mechanics models in random environment : Is the quenched
free energy non-random ? Does it differ from the annealed one ? Can we
give a physical meaning to the strict positivity (resp. vanishing) of
the free energy ? What can be said concerning the
quenched and annealed critical lines (surfaces) in the space of the relevant
parameters of the system ?  \\
We prove that the
quenched free energy exists, is non-random, and strictly smaller than the
annealed free energy whenever the annealed free energy is positive. 

In the forthcoming work \cite{CoqMil2013}  we also investigate the corresponding phase diagram : in the
  plane $(b,h)$, we can prove that the quenched critical line (separating the
phases of positive and zero free energy) lies strictly below the line
$h=0$. Thus there exists a non trivial region where the
field is localized though repulsed on average by the
environment. 
% We do not treat here the questions
% concerning critical exponents of the system, nor the order of the phase
% transition, in presence (absence) of disorder. This has been done for
% a certain class of
% 1-dimensional systems (see below), and is a much more difficult problem in
% dimension 2 and above. 

\section{Results}

We define the quenched (resp. annealed) free
energy per site in $\Lambda \Subset \Z^d$ by :
\begin{equation}
	f^\bfq_{\Lambda} (\ee)=|\Lambda|^{-1}\log \left(\frac{Z_{\Lambda}^{\ee}}{Z_\Lambda^0}\right) ,\quad
	f_{{\Lambda}}^\bfa(\ee)=|\Lambda|^{-1}\log\left(\frac{\E Z_{\Lambda}^{\ee}}{Z_\Lambda^0}\right), \label{eq:freeEnergyDef}
\end{equation}
where $Z_\Lambda^0$ denotes the partition function of the model with
no potential, $e_x \equiv 0$ (i.e. of the Gaussian free field). 
% {equation}
% 	f^\bfq_{\Lambda} (\ee)=|\Lambda|^{-1}\log\left(\frac{Z_{\Lambda}^{\ee}}{Z_{\Lambda}^{0}}\right),\quad
% 	f_{{\Lambda}}^\bfa(\ee)=|\Lambda|^{-1}\log\left(\frac{\E Z_{\Lambda}^{\ee}}{Z_{\Lambda}^{0}}\right), \label{eq:freeEnergyDef}
% \end{equation}
In the case when $\Lambda = \Lambda_n=\lbrace 0, ..., n-1\rbrace^d$
we will use short forms $f^\bfq_{n} (\ee)$ and $f^\bfa_{n} (\ee)$. By
the Jensen inequality, we have $f^\bfq(\ee)\leq
f^\bfa(\ee)$. Moreover, it
is not difficult to see that the annealed model corresponds to the
model with constant (we will also say homogenous) pinning with the strength
\begin{equation}\ell(\ee):=\log(\E(e^{b\cdot e_x+h})).\label{eq:effectiveAnnealedPotential}\end{equation}
for all $x\in\Lambda$. In other words $ \E Z^\ee_\Lambda=Z^{\ell(\ee)}_\Lambda$. We see now that assumption \eqref{eq:assumption} is  minimal for the annealed free energy to be well-defined.

 Below we present our results. The proofs are delegated to Section \ref{sec:proofs}.

% We first prove in Theorem \ref{thm:free_energy2} that ${(f^\bfq_{\Lambda} (\ee))}_\Lambda$ converges to a non-random
% quantity, denoted by $f^\bfq(\ee)$, as $\Lambda\uparrow\Z^d$ provided $|\partial\Lambda|/|\Lambda|\to0$. The same proof shows that
% $(f^{\bfa}_{\Lambda} (\ee))_\Lambda$ converges in $\R$ (towards $
% f^{\bfa}(\ee)$) under the same hypothesis.
% In Fact \ref{lem:positivity_free_energy} we prove that regardless of
% $\ee$ both the quenched and annealed free energies are
% non-negative. In Theorem \ref{thm:ineq_quenched_annealed}, which is 
% the main result of the paper, we prove
% that $f^\bfq(\ee)<f^\bfa (\ee)$ whenever $f^\bfa(\ee)>0$ (this occurs
% iff $\ell(\ee)>0$, see \cite{BolVel2001}).

% \loren{put a physical interpretation!}

\subsection{Existence and basic properties of the free energy}
In this section we present results concerning the existence and the positivity of the free energy. 

\begin{theorem}\label{thm:free_energy}  
Let $d\geq2$ and $\ee$ be an environment such that \eqref{eq:assumption} holds. Then the limit
$$ f^\bfq(\ee):=\lim_{\Lambda\uparrow\Z^d}f^\bfq_{\Lambda}(\ee)\in\R,$$
exists almost surely and in $L^2$, and does not depend on the sequence
$\Lambda\uparrow\Z^d$ provided that $|\partial\Lambda|/|\Lambda|\to0$. \\Moreover, $f^\bfq(\ee)$ is deterministic, i.e.\
$$ f^\bfq(\ee)=\E(f^\bfq(\ee)) \quad\mbox{a.s.}$$
\end{theorem}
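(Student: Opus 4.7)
The plan is to separate the proof into two essentially independent components: (i) concentration of $f^\bfq_\Lambda(\ee)$ around its mean, and (ii) convergence of the deterministic sequence $\E[f^\bfq_\Lambda]$. Part (i) delivers both the $L^2$-convergence and the self-averaging (deterministic) character of the limit; part (ii) is what makes the limit exist in the first place.

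For (i) I would apply the Efron--Stein inequality. If $\ee^{(x)}$ denotes the environment with $e_x$ replaced by an independent copy $e'_x$ of the same law, the elementary identity
\[
\frac{Z_\Lambda^{\ee}}{Z_\Lambda^{\ee^{(x)}}} \;=\; \E_{\mu_\Lambda^{\ee^{(x)}}}\!\Bigl[\exp\!\bigl(b(e_x-e'_x)\ind{\varphi_x\in[-a,a]}\bigr)\Bigr],
\]
combined with the fact that the indicator takes values in $[0,1]$, yields $|\log Z^\ee_\Lambda-\log Z^{\ee^{(x)}}_\Lambda|\leq b|e_x-e'_x|$. Since $\Vare(e_x)=1$, Efron--Stein then gives $\Vare(f^\bfq_\Lambda)\leq b^2/|\Lambda|$. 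Because $\sum_n n^{-d}<\infty$ for $d\geq 2$, Chebyshev plus Borel--Cantelli produce $f^\bfq_n-\E[f^\bfq_n]\to 0$ almost surely along the cubic sequence, while the variance bound simultaneously gives the $L^2$-convergence once the mean converges.

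For (ii) the sequence $\E[f^\bfq_n]$ is first seen to be bounded: from above by Jensen applied to $\log$ combined with the annealed identification $\E[Z^\ee_\Lambda]=Z^{\ell(\ee)}_\Lambda$ of \eqref{eq:effectiveAnnealedPotential} and assumption \eqref{eq:assumption}; and from below by the reverse Jensen bound $\log(Z^\ee_\Lambda/Z^0_\Lambda)\geq \sum_x(be_x+h)\mu^0_\Lambda(\varphi_x\in[-a,a])$, whose $\ee$-expectation is at least $-|h|\,|\Lambda|$. For the existence of the limit itself I would pursue an approximate superadditivity along cubes: tile $\Lambda_{Nn}$ by $N^d$ translates of a slightly shrunken box $\Lambda_{n-r}$ separated by corridors of width $r$, condition on $\varphi$ in the corridors, use the Markov property of the GFF to factorise the conditional measure across the small boxes, and then compare the conditional Gibbs weights with their zero-BC counterparts via a Brascamp--Lieb type argument that exploits convexity of the Hamiltonian. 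A well-tuned choice $r=r(n)$ with $r/n\to 0$ should yield an error of order $\varepsilon(n,r)|\Lambda_{Nn}|$ with $\varepsilon\to 0$, so that Fekete's lemma supplies the limit. Independence of the limiting sequence for $|\partial\Lambda_n|/|\Lambda_n|\to 0$ then follows by sandwiching $\Lambda_n$ between cubes and applying the same boundary-comparison estimate.

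The main obstacle is this superadditivity step in (ii). The pinning indicator $\ind{\varphi_x\in[-a,a]}$ is \emph{not} a monotone function of $\varphi_x$, so the standard FKG route used for attractive polymer pinning does not apply; and in $d=2$ the GFF Green's function decays only logarithmically, which makes the cost of decoupling the small boxes across the corridors genuinely delicate. Overcoming this will likely require a quantitative Gaussian (Brascamp--Lieb or direct) estimate on the corridor contribution, coupled with an optimised choice of corridor width $r(n)$ that balances decoupling error against lost volume.
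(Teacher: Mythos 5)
Your two-part skeleton (concentration around the mean, then convergence of the mean) matches the paper's, but the implementations differ in an interesting way in part (i) and in a problematic way in part (ii). For part (i), your Efron--Stein route is genuinely different from the paper's: they bound the variance by induction along a dyadic sequence $B_n=\Lambda_{2^n-1}$, writing $\tilde f^\bfq_{B_n}=2^{-d}\sum_{i=1}^{2^d}\tilde f^\bfq_{B^i_{n-1}}+\calE_n$ with a deterministic error $|\calE_n|\leq 2^{-cn}$ and exploiting independence of the environment across sub-boxes to propagate a geometric variance bound, whereas your Lipschitz identity $|\log Z^\ee_\Lambda-\log Z^{\ee^{(x)}}_\Lambda|\leq b|e_x-e'_x|$ plus Efron--Stein gives the clean, non-inductive bound $\Vare(f^\bfq_\Lambda)\leq b^2/|\Lambda|$ for every $\Lambda$, needs only finite variance of $e_x$, and bypasses the paper's preliminary truncation of the environment. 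That part of your proposal is correct and arguably simpler.

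Part (ii) has a genuine gap, and it is not merely that the sketch is incomplete: the key step as described would fail. You propose to compare the conditional corridor weights ``via a Brascamp--Lieb type argument that exploits convexity of the Hamiltonian,'' but the full Hamiltonian of this model is \emph{not} convex. The pinning term $-\sum_x(be_x+h)\ind{\varphi_x\in[-a,a]}$ is a piecewise-constant well in $\varphi_x$ whenever $be_x+h>0$, so Brascamp--Lieb (which needs a uniformly convex one-site potential) cannot be applied to the pinned measure directly, and the decoupling estimate does not close. The paper's Lemma~\ref{lem:gaussian_tail_model} circumvents exactly this: it first expands $\mu^\ee_{\Lambda_n}$ as a mixture over the random pinned set $\calA$, so that the non-convex pinning weight is absorbed into combinatorial coefficients and the remaining law is the pure GFF conditioned on a fixed $\calA=A$; this is then stochastically dominated (via FKG) by a GFF conditioned on global positivity, to which Brascamp--Lieb plus entropic-repulsion estimates do apply, giving a tail bound on $\varphi_x$ uniform in $A$. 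That bound yields the \emph{two-sided} approximate additivity $|\tilde f^\bfq_{B_n}-2^{-d}\sum_i\tilde f^\bfq_{B^i_{n-1}}|\leq 2^{-cn}$, so the mean converges by telescoping; one does not even need to verify the superadditivity sign for Fekete. You should also note that the paper truncates $\ee$ before running the additivity estimate, because the Hamiltonian-difference bound $|\mathcal K(\varphi)-\mathcal K(\tilde\varphi)|\leq C|X|\cdot(\text{height bound})$ on the corridors uses boundedness of the environment; your sketch is silent on this, and your Brascamp--Lieb plan would face the same issue.
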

The following is a straightforward corollary.
\begin{corollary}\label{thm:convergence_annealed}
Let $d\geq2$ and $\ee$ be an environment such that \eqref{eq:assumption} holds. Then the limit
$$ f^{\bfa} (\ee):=\lim_{\Lambda\uparrow\Z^d}f^\bfa_{\Lambda}(\ee),$$
exists in $\R$ and does not depend on the sequence
$\Lambda\uparrow\Z^d$ provided that $|\partial\Lambda|/|\Lambda|\to0$. 
\end{corollary}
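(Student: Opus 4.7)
The plan is to exploit the identity recorded just after \eqref{eq:effectiveAnnealedPotential}, namely $\E Z_\Lambda^\ee = Z_\Lambda^{\ell}$ with $\ell := \log\E(e^{b\cdot e_x+h}) \in \R$; this constant is deterministic and finite by the moment assumption \eqref{eq:assumption}. The identity rewrites
$$
f^\bfa_\Lambda(\ee) = |\Lambda|^{-1}\log\bigl(Z_\Lambda^{\ell}/Z_\Lambda^{0}\bigr),
$$
which is a \emph{deterministic} quantity, independent of the realization of $\ee$. So there is no randomness to track and the claim reduces to convergence of a single deterministic sequence indexed by $\Lambda$.

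What remains is to show that this sequence converges as $\Lambda\uparrow\Z^d$ with $|\partial\Lambda|/|\Lambda|\to 0$. The cleanest route is to recognize that $Z_\Lambda^{\ell}$ is the partition function of the \emph{homogeneous} square-well pinning model for the discrete GFF, a model whose free energy limit is classical. Alternatively, one can re-run the arguments behind Theorem~\ref{thm:free_energy} in the degenerate case of a constant environment: the near-super-additivity of $\log Z_\Lambda^{\ell}$, up to a boundary correction of order $|\partial\Lambda|$ coming out of standard GFF gluing estimates, combined with Fekete's lemma, yields convergence along cubes $\Lambda_n$; the vanishing boundary-to-volume hypothesis then promotes this to arbitrary sequences via a standard cube-covering argument.

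Because the sequence under consideration is deterministic, the almost-sure and $L^2$ qualifications appearing in Theorem~\ref{thm:free_energy} become automatic here. The only mild issue is to justify the near-super-additivity rigorously: gluing two disjoint subvolumes under the GFF Hamiltonian \eqref{eq:gffHamiltonian} costs the quadratic edge interactions across their common interface, and these must be absorbed into an $O(|\partial\Lambda|)$ error term that is then killed by the $|\Lambda|^{-1}$ normalization. This is a standard GFF computation and presents no real obstacle, which is precisely why the authors advertise the statement as a ``straightforward'' corollary of Theorem~\ref{thm:free_energy}.
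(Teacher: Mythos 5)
Your core observation is exactly the intended one: because $\E Z_\Lambda^\ee = Z_\Lambda^{\ell(\ee)}$, the annealed free energy $f^\bfa_\Lambda(\ee) = |\Lambda|^{-1}\log(Z_\Lambda^{\ell(\ee)}/Z_\Lambda^0)$ is a deterministic sequence, and its convergence is obtained by running the proof of Theorem~\ref{thm:free_energy} with the constant ``environment'' $\ell(\ee)$ in place of $b\cdot e_x+h$ --- in that setting the truncation step, the expectation, and the variance control all trivialize, leaving only the decoupling estimate \eqref{bounds_free_energy}. Two small imprecisions are worth flagging. First, the gluing error is not simply $O(|\partial\Lambda|)$: because the field is unbounded, cutting the Hamiltonian along the interface $X$ costs a term of order $|X|$ times the \emph{square} of the typical field height near $X$, and controlling that height requires the concentration estimate of Lemma~\ref{lem:gaussian_tail_model} (that is where the extra $2^{2\delta n}$, resp. $n^3$, factors in the paper's bounds come from). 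This makes the correction $|\partial\Lambda|$ times a polynomial/sub-polynomial factor in the box size; it is still $o(|\Lambda|)$, so the conclusion survives, but a ``standard GFF gluing estimate yielding $O(|\partial\Lambda|)$'' is not literally what happens here. Second, the paper's mechanism is a dyadic decomposition with two-sided control (inequality \eqref{bounds_free_energy}) and summability of the error, rather than one-sided super-additivity fed into Fekete's lemma; in the deterministic setting either device would in fact suffice, so this is a difference of bookkeeping rather than of substance. With those caveats, your proof is sound and in the same spirit as what the paper intends when it calls the corollary ``straightforward.''
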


\begin{fact} \label{lem:positivity_free_energy}
Let $d\geq2$ and $\ee$ be an environment such that \eqref{eq:assumption} holds. Then 
 $$f^{\bfa}(\ee)\geq f^{\bfq} (\ee)\geq 0.$$
 \end{fact}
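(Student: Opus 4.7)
My plan is to derive both bounds from two applications of Jensen's inequality applied to a single convenient rewriting of $Z^\ee_\Lambda/Z^0_\Lambda$ as a Gibbs expectation.

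Concretely, I would let $\mu^0_\Lambda$ denote the Gibbs measure \eqref{astrip_measure} specialized to $e_x\equiv 0$; by construction it is deterministic and independent of $\ee$. Then, since setting $e_x\equiv 0$ in the exponent of \eqref{astrip_measure} recovers $Z^0_\Lambda$ and absorbs the deterministic $h$-term into $\mu^0_\Lambda$, one obtains the identity
\[
\frac{Z^\ee_\Lambda}{Z^0_\Lambda}
\;=\; \E_{\mu^0_\Lambda}\!\left[\exp\!\Bigl(b\sum_{x\in\Lambda} e_x\,\ind{\varphi_x\in[-a,a]}\Bigr)\right].
\]
For the lower bound $f^\bfq(\ee)\ge 0$, I would apply Jensen's inequality in the form $\log\E[e^X]\ge\E[X]$ (convexity of $\exp$) inside $\mu^0_\Lambda$ to get
\[
\log\frac{Z^\ee_\Lambda}{Z^0_\Lambda}
\;\ge\; b\sum_{x\in\Lambda} e_x\,\mu^0_\Lambda(\varphi_x\in[-a,a]).
\]
Because the contact probabilities $\mu^0_\Lambda(\varphi_x\in[-a,a])$ do not depend on $\ee$ and $\E[e_x]=0$, averaging over the environment yields $\E f^\bfq_\Lambda(\ee)\ge 0$ for every finite $\Lambda$. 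Using the $L^2$ (hence $L^1$) convergence $f^\bfq_\Lambda(\ee)\to f^\bfq(\ee)$ and the fact that $f^\bfq(\ee)$ is deterministic, both from Theorem \ref{thm:free_energy}, I pass to the limit to conclude $f^\bfq(\ee)\ge 0$ almost surely.

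For the upper bound $f^\bfa(\ee)\ge f^\bfq(\ee)$, I would simply apply Jensen's inequality to the concave function $\log$ with respect to the environment, namely
\[
\E\log\frac{Z^\ee_\Lambda}{Z^0_\Lambda}
\;\le\;\log\E\frac{Z^\ee_\Lambda}{Z^0_\Lambda},
\]
i.e.\ $\E f^\bfq_\Lambda(\ee)\le f^\bfa_\Lambda(\ee)$, and then let $\Lambda\uparrow\Z^d$ with $|\partial\Lambda|/|\Lambda|\to 0$, invoking Theorem \ref{thm:free_energy} and Corollary \ref{thm:convergence_annealed} together with the determinism of $f^\bfq$. No step is technically delicate; the only conceptual point is to absorb the deterministic part $h$ of the potential into the reference measure $\mu^0_\Lambda$, so that only the centered random contribution $be_x$ survives inside the Gibbs expectation and the inner Jensen bound produces non-negativity in environment expectation for free.
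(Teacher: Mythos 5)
Your first bound, $f^{\bfa}(\ee)\geq f^{\bfq}(\ee)$ via Jensen for the concave $\log$ with respect to the environment and a passage to the limit, is correct and identical to the paper's.

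The lower bound $f^{\bfq}(\ee)\geq 0$ has a genuine gap, and it hides in the phrase ``absorbs the deterministic $h$-term into $\mu^0_\Lambda$''. In the paper, $Z^0_\Lambda$ is the partition function of the Gaussian free field \emph{without any pinning term}; this is confirmed by the first line of the paper's own proof of this Fact and by the usage in \eqref{pinned_sites_repr_model}, where $Z^0_\Lambda$ is manifestly the GFF normalization. If you instead take $\mu^0_\Lambda$ to be the measure \eqref{astrip_measure} with $e_x\equiv 0$ (which still carries the $h$-pinning), then the identity you wrote computes $Z^\ee_\Lambda/Z^h_\Lambda$, where $Z^h_\Lambda$ is the partition function with \emph{constant} pinning strength $h$, not $Z^\ee_\Lambda/Z^0_\Lambda$. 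Your Jensen step therefore shows, correctly and at finite volume, that $\E\log(Z^\ee_\Lambda/Z^h_\Lambda)\geq 0$; equivalently, that $f^{\bfq}(\ee)$ is at least the free energy of the homogeneous $h$-pinning model. It does \emph{not} show $f^{\bfq}(\ee)\geq 0$, because you still need $\lim_{\Lambda}|\Lambda|^{-1}\log(Z^h_\Lambda/Z^0_\Lambda)\geq 0$, and for $h<0$ this is exactly the nontrivial point: $Z^h_\Lambda/Z^0_\Lambda=\mu^0_\Lambda\bigl[\exp(h\sum_x\ind{\varphi_x\in[-a,a]})\bigr]<1$, and one must show its logarithm is $o(|\Lambda|)$. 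Indeed, if you keep the paper's $Z^0_\Lambda$ and apply Jensen directly, you get $\E f^\bfq_\Lambda(\ee)\geq h\,|\Lambda|^{-1}\sum_x\mu^0_\Lambda(\varphi_x\in[-a,a])$, which is strictly negative for $h<0$ at every finite volume, so no finite-volume Jensen argument of this kind can close the gap on its own.

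What is missing is the entropic-repulsion input. The paper lower-bounds $\mu^0_{\Lambda_n}\bigl[\exp(\sum_x(be_x+h)\ind{\varphi_x\in[-a,a]})\bigr]\geq \mu^0_{\Lambda_n}(\varphi_x\geq a\ \forall x)$, an estimate that is uniform in (indeed independent of) $\ee$, and then uses the entropic repulsion results of Bolthausen--Deuschel--Giacomin ($d=2$) and Bolthausen--Deuschel--Zeitouni ($d\geq 3$) together with FKG to show this probability is $\exp(-o(n^d))$. Your Jensen reduction to the homogeneous case is a sensible first step (and in the special case $h=0$ it already gives a very short proof), but it does not dispense with the entropic-repulsion argument; you would still need to supply it, or cite the non-negativity of the homogeneous free energy, to conclude.
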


{Moreover for the annealed model, using a variant of the argument in
the proof of \cite[Theorem 2.4]{BolVel2001}, it is possible to show that
\cite{Velenik:fk},}
\begin{fact}\label{fact:freeEnergyAnnealedEsitmates}
The annealed free energy is a non-decreasing function of
$\ell(\ee)$ such that $f^\bfa(\ee)=0$ whenever $\ell(\ee)<0$.
 Moreover, for $d\geq 3$ there exist a constant $C_d>0$ such that 
  $$
  %f^\bfa(\ee) = C_d \ell(\ee) + o(\ell(\ee)),
{  f^\bfa(\ee) = C_d \ell(\ee)
  (1+o(1))\quad \text{ as }\ell(\ee)\to0,
}
$$
%when $\ell(\ee)\geq 0$ is small.
For $d=2$ there exists a constant $C_2$ such that 
$$
%f^\bfa(\ee) = C_2 \frac{\ell(\ee)}{\sqrt{|\log(\ell(\ee))|}} + o \left(\frac{\ell(\ee)}{\sqrt{|\log(\ell(\ee))|}}\right),
{f^\bfa(\ee) = C_2 \frac{\ell(\ee)}{\sqrt{|\log(\ell(\ee))|}} (1+o(1)) \quad \text{ as }\ell(\ee)\to0
}
$$
%when $\ell(\ee)\geq0$ is small.
\end{fact}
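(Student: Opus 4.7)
Since $\E Z^\ee_\Lambda = Z^{\ell(\ee)}_\Lambda$, it is enough to study the free energy $f(\ell)$ of the homogeneous square-well pinning model at strength $\ell\in\R$. Monotonicity of $\ell\mapsto f(\ell)$ would follow from $\partial_\ell\log Z^\ell_\Lambda = \sum_{x\in\Lambda}\mu^\ell_\Lambda(\varphi_x\in[-a,a])\geq 0$, an inequality preserved in the thermodynamic limit. The vanishing on $\{\ell<0\}$ is immediate: the weight $\exp(\ell\sum_x\ind{\varphi_x\in[-a,a]})\leq 1$ gives $Z^\ell_\Lambda\leq Z^0_\Lambda$ and hence $f(\ell)\leq 0$, which combined with Fact \ref{lem:positivity_free_energy} forces $f(\ell)=0$.

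The substantive content is the small-$\ell$ asymptotic, for which I would follow the strategy of Bolthausen and Velenik. For the lower bound in $d\geq 3$, Jensen's inequality applied to the probability measure $\mu^0_\Lambda$ yields
\[
\log(Z^\ell_\Lambda/Z^0_\Lambda) = \log\E_{\mu^0_\Lambda}\bigl[e^{\ell N_\Lambda}\bigr] \geq \ell\sum_{x\in\Lambda}\mu^0_\Lambda(\varphi_x\in[-a,a]),
\]
where $N_\Lambda:=\sum_x\ind{\varphi_x\in[-a,a]}$. Since the GFF admits a thermodynamic limit in $d\geq 3$, the right-hand side divided by $|\Lambda|$ converges to $\ell\cdot C_d$ with $C_d:=\mu^0_{\Z^d}(\varphi_0\in[-a,a])>0$, giving $\liminf_{\ell\to 0^+}f(\ell)/\ell\geq C_d$. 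The matching upper bound would follow from the convexity of $f$ together with a quantitative concentration of $N_\Lambda/|\Lambda|$ around $C_d$ under $\mu^0_\Lambda$; the key input is the quadratic covariance bound $\Cove_{\mu^0_\Lambda}(\ind{\varphi_x\in[-a,a]},\ind{\varphi_y\in[-a,a]}) \lesssim G(x,y)^2$ (coming from the symmetry of $[-a,a]$ around $0$, which kills the linear term in the expansion in the bivariate normal correlation), together with the decay $G(x,y)\lesssim |x-y|^{-(d-2)}$ of the Gaussian Green's function.

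For $d=2$ the picture changes qualitatively because the infinite-volume GFF does not exist: the variance of $\varphi_x$ under $\mu^0_{\Lambda_n}$ grows like $(2/\pi)\log n$, the local pinning probability $\mu^0_{\Lambda_n}(\varphi_x\in[-a,a])$ decays like $1/\sqrt{\log n}$, and a naive Jensen bound only yields $f_n(\ell)\gtrsim \ell/\sqrt{\log n}$, which vanishes in the thermodynamic limit. The true free energy is instead governed by a non-trivial balance at an intermediate localisation scale $n^*(\ell)$, selected by matching the pinning gain against the logarithmic entropy cost of confining the 2D GFF at this scale, ultimately giving $\log n^*(\ell)\asymp|\log\ell|$. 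Following the variant of \cite[Thm.~2.4]{BolVel2001} alluded to by the authors, I would lower-bound the partition function by coarse-graining $\Lambda$ into blocks of side $n^*(\ell)$ in which the pinning reward is activated with positive probability, and match this from above by a block renormalisation that reduces the problem to an effective homogeneous pinning of order unity at scale $n^*(\ell)$. The main obstacle in my view is precisely this two-dimensional coarse-graining: optimising the block size, controlling the Gaussian fluctuations of the GFF at the selected intermediate scale, and extracting the explicit constant $C_2$ all require sharp estimates on the 2D GFF Green function rather than the soft arguments that suffice in $d\geq 3$.
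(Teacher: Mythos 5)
The paper gives no proof of this Fact; it is attributed to unpublished work of Velenik (\cite{Velenik:fk}) and said to follow from a variant of the argument of \cite[Theorem 2.4]{BolVel2001}. There is therefore no argument in the paper to compare against, and I will assess yours on its own terms. Monotonicity, the vanishing for $\ell(\ee)<0$, and the Jensen lower bound for $d\geq3$ with $C_d=\mu^0_{\Z^d}(\varphi_0\in[-a,a])$ are all correct; so is your observation that the symmetry of $[-a,a]$ kills the first Hermite coefficient and yields $\Cove\lesssim G(x,y)^2$.

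The upper bound for $d\geq3$, however, does not follow from convexity plus variance concentration of $N_\Lambda/|\Lambda|$, and this is a genuine gap. Convexity only tells you that $f^\bfa(\ell)/\ell$ is non-decreasing, hence its limit as $\ell\downarrow0$ exists and is $\geq C_d$; it cannot cap it. Nor does $f_n'(0)\to C_d$ for convex $f_n\to f$ force $f'(0^+)=C_d$: take $f_n(\ell)=n^{-1}\log\bigl(\tfrac12 e^{n\ell}+\tfrac12\bigr)$, which has $f_n(0)=0$, $f_n'(0)=\tfrac12$, $f_n\to\max(\ell,0)$ pointwise, yet $f'(0^+)=1$. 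In your setting the obstruction is concrete: the quadratic covariance bound gives $\Vare_{\mu^0_{\Lambda_n}}(N_n)=O(n^{d+1})$ for $d=3$, so $f_n''(0)=n^{-d}\Vare_{\mu^0_{\Lambda_n}}(N_n)$ is unbounded in $n$ and no Taylor estimate of the form $f^\bfa(\ell)\leq C_d\ell+O(\ell^2)$ can be extracted. To close the upper bound one needs either exponential concentration of $N_n/n^d$ at speed $n^d$ (a large-deviation estimate much stronger than a second-moment bound), or --- more in the spirit of what the authors cite --- a bound on the pinned density $\lim_n n^{-d}\mu^\ell_{\Lambda_n}(N_n)$ of the form $C_d+O(\ell)$ uniformly for small $\ell$, coming from comparison with Bernoulli fields as in \cite[Theorem 2.4]{BolVel2001} (the same domination the paper invokes in the proof of Theorem~\ref{thm:ineq_quenched_annealed}), followed by integration over $\ell$. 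Your $d=2$ paragraph correctly identifies the qualitative picture, with the intermediate localisation scale and the need for coarse-graining, but it is, as you acknowledge, a plan rather than a proof.
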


\subsection{Strict inequality between quenched and annealed free energies}
In this section we state the main results of our paper. Before that we recall that we assume $b>0$ in \eqref{astrip_measure} hence the disorder is always non-trivial. 

\begin{theorem}\label{thm:ineq_quenched_annealed}
Let $d\geq2$ and $\ee$ be an environment such that \eqref{eq:assumption} holds. Then 
\[
f^\bfq(\ee)<f^\bfa(\ee) \quad \mbox{whenever}\quad f^\bfa(\ee)>0.
\]
Moreover, the following quantitative bounds hold. Let
$\gamma:=\exp(b\cdot e_0+h-\ell(\ee))$, then
\begin{enumerate}
\item[(a)] For $d\geq3$ we have
\[
f^\bfq(\ee)-f^\bfa(\ee)\leq
\E \log(\lambda
  \gamma +1-\lambda),
\]
{where $\lambda:= \frac{C_1\ell(\ee)}{1+C_1 \ell(\ee)}$ for some $C_1 =C_1(a)>0$.}
\item[(b)] For $d=2$ we have 
$$
f^\bfq(\ee)-f^\bfa(\ee)\leq\E\log\left(\frac{\lambda}{|\log\lambda|}\gamma+1-\frac{\lambda}{|\log\lambda|}\right),$$
{for some $\lambda=\lambda(\ell(\ee))>0$ which equal to
$\frac{C_2\ell(\ee)}{\sqrt{|\log{\ell(\ee)}|}}$ with $C_2=C_2(a)>0$ for $\ell(\ee)$ small enough.}
\end{enumerate}
\end{theorem}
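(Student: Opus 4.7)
My starting point would be the identity
\[
\frac{Z_\Lambda^\ee}{\E Z_\Lambda^\ee}=\E_{\mu_\Lambda^{\bfa}}\!\Bigl[\prod_{x\in\Lambda}\gamma_x^{\ind{\varphi_x\in[-a,a]}}\Bigr],
\]
which comes from the factorization $e^{(be_x+h)\ind{\varphi_x\in[-a,a]}}=e^{\ell(\ee)\ind{\varphi_x\in[-a,a]}}\gamma_x^{\ind{\varphi_x\in[-a,a]}}$, where $\mu_\Lambda^{\bfa}$ denotes the Gibbs measure of the annealed (homogeneous) pinning model of strength $\ell(\ee)$. Since $\E\gamma=1$ and the $e_x$ are independent, the right-hand side has disorder-mean $1$, and Jensen's inequality for $\log$ yields only the weak bound $f^\bfq\leq f^\bfa$ (Fact \ref{lem:positivity_free_energy}). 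The theorem requires a strict, quantitative sharpening of this Jensen step.

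The core of the proof would be the site-wise refinement
\[
\E\log\E_{\mu_\Lambda^{\bfa}}\!\Bigl[\prod_x\gamma_x^{\ind{\varphi_x\in[-a,a]}}\Bigr]\leq |\Lambda|\,\E\log(\lambda\gamma+1-\lambda),
\]
for an appropriate $\lambda$. A clean sanity check: for $|\Lambda|=1$ an elementary computation gives $Z^\ee/\E Z^\ee=\lambda\gamma+1-\lambda$ exactly, with $\lambda=\mu^{\bfa}(\varphi_0\in[-a,a])$ the annealed single-site pinning probability. For general $\Lambda$, the natural strategy is to compare $\E_{\mu_\Lambda^{\bfa}}[\prod\gamma^{\ind}]$ with the factorized surrogate $\prod_x(p_x\gamma_x+1-p_x)$ (where $p_x:=\mu_\Lambda^{\bfa}(\varphi_x\in[-a,a])$), and to use positive correlations of the contact indicators under $\mu_\Lambda^{\bfa}$ to push the disorder-averaged log of the true quantity below that of its product surrogate: heuristically, positive correlations make $M:=\E_{\mu^{\bfa}}[\prod\gamma^{\ind}]$ fluctuate more in $\ee$, and by strict concavity of $\log$ this lowers $\E\log M$.

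Once the site-wise bound is in place, $\lambda$ would be identified as the asymptotic annealed contact probability, via Fact \ref{fact:freeEnergyAnnealedEsitmates}: the Bolthausen--Velenik analysis gives $\lambda\sim C_d\ell(\ee)$ in $d\geq3$, yielding the form $\lambda=C_1\ell/(1+C_1\ell)$ in (a), while in $d=2$ the logarithmic divergence of the Green's function forces the extra $\sqrt{|\log\ell|}$ denominator and explains the factor $\lambda/|\log\lambda|$ in (b). Strict concavity of $\log$ combined with the non-degeneracy of $\gamma$ (ensured by $b>0$) then gives $\E\log(\lambda\gamma+1-\lambda)<0$ whenever $\lambda>0$, and $\lambda>0$ holds whenever $f^\bfa>0$ by Fact \ref{fact:freeEnergyAnnealedEsitmates}, delivering the strict inequality $f^\bfq<f^\bfa$.

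The main obstacle would be the positive-correlation step, since the event $\{\varphi_x\in[-a,a]\}$ is not monotone in $\varphi_x$ and so the standard FKG inequality does not apply to $\mu_\Lambda^{\bfa}$ directly. For the free GFF these contact events are positively correlated by the Gaussian correlation inequality applied to symmetric convex sets, but carrying this property through the square-well tilt requires care. I would handle it by a coarse-graining that isolates a typical bulk region (using $|\partial\Lambda|/|\Lambda|\to0$), applies the positive-correlation comparison there conditionally on the boundary values of the field via the Markov property of the GFF, and uses the quantitative contact-density estimates from Fact \ref{fact:freeEnergyAnnealedEsitmates} to identify the constants. The interplay between the block size and the divergent two-dimensional Green's function is precisely what forces the structural distinction between (a) and (b).
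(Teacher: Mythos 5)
Your setup—the identity $Z_\Lambda^\ee/\E Z_\Lambda^\ee=\E_{\mu_\Lambda^{\ell(\ee)}}[\prod_x\gamma_x^{\ind{\varphi_x\in[-a,a]}}]$ and the recognition that the $|\Lambda|=1$ case is exact—is the right starting point, and it coincides with the paper's reduction to bounding $n^{-d}\E\log\mu_n^{\ell(\ee)}(\bgamma^{\calA})$. But the central step you propose is a genuine gap. You want to show
\[
\E_\ee\log\E_{\mu^{\ell(\ee)}}\Bigl[\prod_x\gamma_x^{\ind{\varphi_x\in[-a,a]}}\Bigr]\;\leq\;\sum_x\E_\ee\log\bigl(p_x\gamma_x+1-p_x\bigr)
\]
by invoking ``positive correlations of the contact indicators make $M$ fluctuate more, hence $\E\log M$ drops below the product surrogate.'' This is not a known inequality, and the heuristic does not constitute a proof: more correlation does not, by itself, control $\E_\ee\log M$ through the variance of $M$ alone (the second-order Jensen correction depends on the full distribution of $M$, and the variance of $M$ involves $(\E\gamma^2)^{|A\cap B|}$ averaged over two \emph{independent} draws $A,B\sim\mu^{\ell(\ee)}(\calA\in\cdot)$, which positive association of the indicators does not obviously monotonize). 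You also correctly flag that $\{\varphi_x\in[-a,a]\}$ is not increasing, so FKG does not apply; but the escape hatch you suggest (Gaussian correlation inequality for symmetric convex sets, then push through the square-well tilt via coarse-graining) is too vague to assess and, more importantly, does not address the missing comparison between $\E\log M$ under a positively associated field and under its Bernoulli marginal. Note also the authors' own remark that the interpolation-style route, which is the closest rigorous cousin of what you sketch, is known only for Gaussian disorder—another signal that a site-wise argument of this kind is not easy to carry through in the generality of \eqref{eq:assumption}.

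The paper circumvents all of this with a different device: a \emph{random permutation} $\pi$ of the vertices of $\Lambda_n$. Writing $\tildeE$ for the average over $\pi$, exchangeability of the i.i.d.\ environment gives $\E\log\mu_n^{\ell(\ee)}(\bgamma^{\calA})=\E\tildeE\log\mu_n^{\ell(\ee)}(\bgamma^{\pi(\calA)})$, and Jensen applied to $\tildeE$ (not to $\E$) yields $\leq\E\log\tildeE\mu_n^{\ell(\ee)}(\bgamma^{\pi(\calA)})$. The crucial payoff is that $\tildeE\mu_n^{\ell(\ee)}(\bgamma^{\pi(\calA)})$ depends on the law of $\calA$ only through the distribution of the \emph{count} $|\calA|$ (see \eqref{eq:basicEstimate}). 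That distribution is then controlled by the Bolthausen--Velenik stochastic domination of $\nu_n^{\ell(\ee)}$ by a Bernoulli product measure $\mathbb B_n^\lambda$ (strong in $d\geq3$, weaker in $d=2$, which is exactly what produces the $\lambda/|\log\lambda|$ in part (b)). After some bookkeeping, this reduces the bound to $\max_{\alpha\in[\lambda,1]}\E\log(\alpha\gamma_0+1-\alpha)$, a max that is evaluated at $\alpha=\lambda$ because $\alpha\mapsto\E\log(\alpha\gamma_0+1-\alpha)$ is nonincreasing. Controlling the $\max$ over $\alpha$ inside the $\E$ for general environment laws is itself nontrivial (the paper truncates $\gamma$ and uses Hoeffding plus a discretization of $[\lambda,1]$)—another piece your sketch would need and does not supply. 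In short: you would need either to prove the correlation inequality you posit, or to discover the permutation/symmetrization reduction to $|\calA|$; as written, the argument does not close.
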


% 
% \begin{theorem}\label{thm:ineq_quenched_annealed}
% Let $d\geq2$ and $\ee$ be an environment such that \eqref{eq:assumption} holds. Then 
% \[
% f^\bfq(\ee)<f^\bfa(\ee) \quad \mbox{whenever } f^\bfa(\ee)>0.
% \]
% Moreover, the following quantitative bounds hold. Let
% $\gamma:=\exp(b\cdot e_0+h-\ell(\ee))$, then
% \begin{enumerate}
% \item[(a)] For $d\geq3$ and $\ell(\ee)$ small enough we have
% \[
% f^\bfq(\ee)-f^\bfa(\ee)\leq
% \E \log(\lambda
%   \gamma +1-\lambda),
% \]
% where 
% $$\lambda:= C_1(1\wedge a^{-1}) a (e^{\ell(\ee)}-1)\quad \mbox{
%   for some} \quad C_1>0.$$
% 
% %  there exist $C>0$ such that
% % \[
% % f^\bfa(\ee) - f^\bfq(\ee) \geq C (f^\bfa(\ee))^2 \wedge 1.
% % \]
% 
% \item[(b)] For $d=2$, there exists some $C_1,C_2>0$ such that for $a$ and $a(e^{\ell(\ee)}-1)$ being
% small enough we have
% $$
% f^\bfq(\ee)-f^\bfa(\ee)\leq\E\log\left(\frac{\lambda}{|\log\lambda|}\gamma+1-\frac{\lambda}{|\log\lambda|}\right),$$
% with
% \begin{equation*}
% \lambda=C_1a(e^{\ell(\ee)}-1)|\log(C_2a(e^{\ell(\ee)}-1))|^{-1/2}.
% \end{equation*}
% \end{enumerate}
% \end{theorem}

\begin{rem} 
\begin{enumerate}
\item The explicit expression for $\lambda(\ell(\ee))$ for large
  $\ell(\ee)$ in dimension 2 could be a
  priori derived by a method similar to the one developed in
  \cite{BolVel2001,IofVel2000}. One should keep track, though, of the dependency in $\ell(\ee)$ of
  the size of all the boxes. This information is of little relevance here.
\item{ It follows from Fact \ref{fact:freeEnergyAnnealedEsitmates} that
  in all dimensions $d\geq2$ we have
  $\lambda=\tilde C_d f^\bfa(\ee)$ for $\ell(\ee)$ small enough and
  some constants
  $\tilde C_d=\tilde C_d(a)>0$.}
\item
	We present two examples of the bounds for the concrete environment laws. Let the environment be given by the Bernoulli random variables, i.e. $\mathbb P(e_x = -1) = \mathbb P(e_x =1 )=1/2$. Then there exists a constant $C>0$ such that for $b,h$ small enough and $\ell(\ee)>0$ we have
	$$f^\bfq(\ee)-f^\bfa(\ee)\leq  -C b^2 (b^2/2+h) +o(b^2 (b^2/2+h)),$$
	for $d\geq 3$. We recall that in this case $f^\bfa(e) \approx \ell(\ee) \approx b^2/2+h$. Further for $d=2$ we obtain
	$$f^\bfq(\ee)-f^\bfa(\ee)\leq - C b^2 \frac{b^2/2+h}{|\log (b^2/2+h)|^{3/2}}+o\left(b^2 \frac{b^2/2+h}{|\log (b^2/2+h)|^{3/2}}\right).$$
	We note that the condition $\ell(\ee)>0$ yields $b^2/2+h >0$ hence the expression above is well-defined.\\
	The same estimates hold for the Gaussian environment i.e. $e_x
        \sim \mathcal{N}(0,1)$.
\end{enumerate}
\end{rem}

\section{Related results and open problems}
\subsection{Known results about homogenous models}\label{sec:intro_homogenous}
Our work relies on certain techniques developed in \cite{BolVel2001},\cite{DeuVel2000}. They focus on the case $e_x = const$ which we will refer to as the homogenous pinning model.
The review article, \cite{Vel2006}, focuses on the localization and delocalization of
random interfaces (in a non-random environment).\\
We briefly describe some results relevant to our work. Let us now consider the following model \footnote{Similar results
  hold for the homogenous version of our model with
  $b\cdot e_x+h=\varepsilon>0$ for all $x\in\Z^d$, see
  \cite{BolVel2001}.} :
\begin{eqnarray}\label{delta-pinning}
 \mu_\Lambda^{\varepsilon}(d\varphi)=\frac1{Z_\Lambda^{\varepsilon}}
\exp\left({-\calH_\Lambda(\varphi) }\right)
\prod_{x\in\Lambda}\left(d\varphi_x+\varepsilon\delta_0(d\varphi_x)\right)\prod_{y\in\Lambda^c}\delta_0(d\varphi_y),
\end{eqnarray}  
with $\varepsilon\geq 0$ and $\calH_\Lambda$ given by
\eqref{eq:gffHamiltonian}. 

We denote by $\mathcal{A}$ the (random) set of pinned sites i.e. $\mathcal{A}:=\{x\in\Lambda:\varphi_x =0\}$. Important results concerning the distribution of $\mathcal{A}$ were
obtained in \cite{BolVel2001}. First of all, it is strong FKG in the sense of
\cite{FKG1971}. Moreover, it can be compared with i.i.d.\ Bernoulli fields. Let $\B^\alpha_\Lambda$ be the
Bernoulli product measure with parameter $\alpha\in[0,1]$ in $\Lambda$. Namely the measure on subsets of $\Lambda$ given by 
$\B^\alpha_\Lambda(\mathcal{A}=A) = \alpha^{|A|}(1-\alpha)^{|\Lambda| - |A|}$. By \cite[Theorem 2.4]{BolVel2001} there exist constants $0<c_-(d)<c_+(d)<\infty$ such that
for any $\Lambda$, any $B\subset\Lambda$, and $\varepsilon$ sufficiently small we have 
\begin{eqnarray*}%\label{bernoulli_bounds}
\B^{c_-(d)g(\varepsilon)}_\Lambda(\calA\cap B=\varnothing)\leq
\mu^\varepsilon_\Lambda(\calA\cap B=\varnothing)\leq
\B^{c_+(d) g(\varepsilon)}_\Lambda(\calA\cap B=\varnothing),
\end{eqnarray*}
where
$$ g(\varepsilon)=\left\{\begin{tabular}{ll} 
%$\varepsilon^2$ & $d=1,$\\
$\varepsilon\vert\log\varepsilon\vert^{-1/2}$& $d=2,$\\
$\varepsilon$ & $d\geq3.$ \end{tabular}\right.$$
For $d\geq3$, an even stronger statement is true; the distribution of the pinned sites
 is strongly stochastically dominated by
$\B_\Lambda^{c_+(d)\varepsilon/(1+c_+(d)\varepsilon)}$ and strongly stochastically dominates
$\B_\Lambda^{c_-(d)\varepsilon/(1+c_-(d)\varepsilon)}$. Concerning the behavior of the interface, it is known that an
arbitrarily weak pinning $\varepsilon$ is sufficient to localize the
interface. Indeed, in \cite{DeuVel2000}, Deuschel and Velenik proved that, for a class of models including $\mu^\varepsilon_\Lambda$, 
 the infinite volume Gibbs measure, denoted by $\mu^{\varepsilon}$, exists in all
$d\geq1$. Further, for  $\varepsilon$ small enough and $K$ large
enough we have\footnote{ $a\asymp_d b$ means that there exist two constants $0 < c_1 \leq
    c_2 < \infty$, depending only on $d$, such that $c_1b
    \leq a \leq c_2b$.}
\begin{eqnarray*}%\label{localization}
-\log\mu^{\varepsilon}(\varphi_0>K) \asymp_d 
\left\{\begin{tabular}{ll} 
$K$ & $d=1,$\\
$K^2/\log K$& $d=2,$\\
$K^2$ & $d\geq3.$ \end{tabular}\right.
\end{eqnarray*}
The so-called mass i.e. rate of exponential decay of the
two-point function associated to the infinite volume Gibbs
measure $\mu^{\varepsilon}$ is defined, for any $x\in\mathbb S^{d-1}$, by 
$$ m^\varepsilon(x):=-\lim_{k\to\infty}\frac1k\log \mu^\varepsilon (\varphi_0\varphi_{[kx]}).$$
where $[x]$ is the vector of integer parts of $x$'s
coordinates. In \cite{IofVel2000} Ioffe and Velenik showed that for any
$\varepsilon>0$ and $d\geq1$, 
\begin{eqnarray*}\label{positive_mass}
\inf_{x\in\mathbb S^{d-1}}m^\varepsilon (x)>0.
\end{eqnarray*}
The
localization of the interface becomes weaker as $\varepsilon\downarrow 0$. We
can quantify this by studying the behavior of the variance and the
mass of the field in this limit. The most precise results were obtained
by Bolthausen and Velenik in \cite{BolVel2001}. For $d=2$ and $\varepsilon$ small enough, 
\begin{eqnarray*}%\label{critical_behavior_variance}
  \mu^{\varepsilon}(\varphi_0^2)=\frac1\pi\vert\log\varepsilon\vert+O(\log\vert\log\varepsilon\vert).
\end{eqnarray*}
We recall that for $d\geq 3$ the variance is bounded even when $\varepsilon=0$. For $d\geq2$ and $\varepsilon$ small enough we have
\begin{eqnarray*} %\label{critical_behavior_mass}
 m^\varepsilon \asymp_d 
\left\{\begin{tabular}{ll} 
$\sqrt{\varepsilon}\vert\log\varepsilon\vert^{-3/4}$ & $d=2$,\\
$\sqrt{\varepsilon}$& $d\geq3$.
\end{tabular}\right.\end{eqnarray*}

\subsection{Known results about disordered models}

\subsubsection{Models on $\Z$}\label{1d}

In \cite{AleSid2006}, Alexander and Sidoravicius studied the
1-dimensional model. They consider a
polymer, with monomer locations modeled by the trajectory of a Markov
chain $(X_i)_{i\in\Z}$, in the presence of a potential {(usually called a ``defect line'')} that interacts with the polymer
when it visits 0. Formally, let $V_i$ be an i.i.d.\ sequence of $0$-mean random variables, the model is given by weighting the realization of the chain with the Boltzmann term
$$ \exp\left(\beta\sum_{i=1}^n(u+V_i)\ind{X_i=0}\right).$$
They studied the localization transition in this model.
We say that the polymer is pinned, if a positive fraction of monomers is at 0. {In the plane $(\beta, u)$
critical lines are defined as follows: for $\beta$ fixed, let $u_c^\bfq(\beta)$
(resp. $u_c^{\bfa}(\beta)$) be the value of $u$ above which the polymer is pinned
with probability 1
(for the quenched (resp. annealed) measure)}. They showed that the quenched free
energy and critical point are non-random, calculated the critical point for a deterministic
interaction (i.e.\ $V_i \equiv 0$) and proved 
that the critical point in the quenched case is strictly smaller. 

When the underlying chain is a symmetric simple random walk
on $\Z$, the deterministic critical point is $0$, so having the quenched critical point $u_c(\beta)$ strictly
negative means that, even when the disorder is repulsive on average,
the chain is pinned. This result was obtained by Galluccio and Graber
in \cite{GalGra1996} for a periodic potential, which is frequently
used in the physics literature as a ``toy model'' for random
environments.

In \cite{GiaTon2005}, Giacomin and Toninelli investigated the order of
the localization transition in general models of
directed polymers pinned on a defect line. They proved that for quite a general class of disordered models the transition is at least of the second order,
i.e.\ the free energy is differentiable at the critical line
and the order parameter (which is the density of pinned sites) vanishes continuously at the transition.  This is particularly interesting as there are examples of ``non-disordered'' systems with the first order transition only (e.g. $(1+d)$-dimensional directed polymers for $d\geq5$, see \cite[Proposition 1.6]{Gia2007}). These results 
imply that the presence of a disorder may have a smoothening effect on the transition.
% Knowing that the
% corresponding non-disordered models can have a first order
% localization transition (cf. for example \cite{Gia2007}, Proposition 1.6, for
% $(1+d)$-dimensional directed polymers and $d\geq5$), namely a discontinuous first derivative, the
% result shows that the presence of disorder smoothes the transition.\\

For 1-dimensional models, the
renewal structure of the return times to 0 plays important role, in particular it simplifies a lot of calculations. In \cite{Ale2008}, Alexander
{emphasized this fact} by assuming that the tails of the excursion length between consecutive returns of $X$ to $0$ are as $n^{-c}{\phi(n)}$ (for some $1 < c < 2$ and slowly varying $\phi$).
 % \sout{In \cite{Ale2008}, Alexander
 % {emphasized this fact} by considering again a polymer, with
 % monomer locations modeled by the trajectory of a Markov chain, in the
 % presence of a potential that interacts with the polymer when it visits
 % a particular site 0, and assuming the general fact that the
 % probability of an excursion of length $n$ is given by $n^{-c}{\phi(n)}$
 % (for some $1 < c < 2$ and slowly varying $\phi$).} 

% He analyzed the quenched and annealed critical
% curves {in the plane $(u, \beta)$} for different values of $c$. He prove that for $c>3/2$ at high
% temperature the quenched and annealed curves differ only
% in a very small neighborhood of the critical point, whereas for
% $c<3/2$ the quenched and annealed critical points are always equal. This was
% a prediction made by theoretical physicists on the basis of the
% so-called Harris criterion (see \cite{Gia2007}, Section 5.5, for more
% informations). \TDD{PM: I do not understand the following 2 sentences.} The
% relevant case in the framework of this paper is the case of the
% Markov chain given by a simple symmetric random walk on $\Z$, which corresponds
% to $c=3/2$ and $\phi(n)\sim K$ for $K>0$, which is borderline. The
% author is unable to say whether the critical behavior is altered by
% the disorder in this case.

He analyzed the quenched and annealed critical
curves {in the plane $(u, \beta)$} for different values of $c$. He proved that for $c>3/2$ the quenched and annealed curves differ only when the temperature is high enough. For $c<3/2$ the quenched and annealed critical points are always equal. This confirmed a prediction made by theoretical physicists on the basis of the
so-called Harris criterion (see \cite[Section 5.5]{Gia2007}). The case $c=3/2$ had been open until recent papers \cite{GiaLacTon2010} and
\cite{GiaLacTon2011} of Giacomin, Lacoin and Toninelli. They proved that in this case the disorder is
relevant in the sense that the quenched critical point in shifted with
respect to the annealed one. They had considered i.i.d.\ Gaussian disorder
in the first paper and extended the result to more general i.i.d.\ laws,
as well as refined the lower bound on the shift,
in the second paper. Note that their results include pinning of a directed polymer in dimension (1+1) as already
mentioned, but also the classical models of two-dimensional wetting
of a rough substrate, pinning of directed polymers on a defect line in
dimension (3 + 1) and pinning of an heteropolymer by a point potential
in three-dimensional space.

\begin{rem} Paper \cite{Ton2008} contains a short
proof of the strict inequality between quenched and annealed free
energies in the case of the polymer model described above, with $V_i$
being Gaussian. The proof technique relies of the ``interpolation''
method well-known in the theory of spin glasses. Albeit we expect that
this approach can be extended to $d\geq 2$ its applicability is
limited to the Gaussian case. Any generalization beyond that will
probably be a non-trivial task. 
These topics will be addressed in a forthcoming paper.
%  It works for
% any $c>1$ and (i.i.d.) Gaussian
% environment $V=(V_i)_{i\in\Z}$. The proof's strategy is a
% generalization of the ``interpolation'' technique known in the domain
% of mean field spin glasses, which heavily relies on the Gaussian structure of the
% disorder. \loren{The same technique also provides the shift between
%   the quenched and annealed critical lines. The extension to $d\geq2$
% does not seem to be an issue }but we emphasize the fact that the
% generalization non Gaussian environments as in the
% present work is not a trivial task.
\end{rem}

\subsubsection{Models on $\Z^d$, $d\geq2$}

The only result about random pinning models we are aware of is
\cite{JanDelVel2005}. In this paper, Janvresse, De La Rue and Velenik
considered the model (\ref{delta-pinning}) in dimension 1 and 2 with
$\varepsilon=\varepsilon_x\in\{0,\eta\}$, which models an interface
interacting with an attractive diluted potential. 
They show that the interface is localized {in a sufficiently
  large but finite box} (in the sense that there is a density of pinned sites) if and only if the sites at which the pinning potential is non-zero have positive density. Note that in this paper they characterize the set of realizations of
the environment for which pinning holds (the disorder is fixed, not sampled from some given distribution), which is  stronger than an almost sure result.
% \sout{
% In this
% case, an explicit description of the set of disorder configurations
% leading to pinning can be obtained : they showed that for fixed $\delta$ and $\eta$>0,
% if the density of pinning sites of $\ee$ (i.e.\ sites $x$ where $e_x=\eta$) in
% a box of size $n$ (sufficiently large) is bigger than $\delta$, then the average density of
% pinned sites (i.e.\ sites $x$ for which $\varphi_x=0$) in this box is
% bigger than a positive constant (depending on $\delta$ and $\eta$). 
% In other words, the interface is localized (in the sense that there is a density of pinned sites) if and only if the sites at which the pinning potential is non-zero have positive density.
% Note that in this paper they characterize the set of realizations of
% the environment for which pinning holds (the disorder is fixed, not sampled from some given distribution), which is much stronger than
% an almost sure result. }
% Note that in Theorem  
% \ref{thm:relevant_disorder_P}, we prove in particular that the difference between
% quenched and annealed free energies is a decreasing function of the
% asymptotic density of sites where the pinning potential is small, for small variance
% of the environment. 
\\

We also mention a series of papers by Külske \emph{et al.}
(\cite{KulOrl2006}, \cite{EntKul2008}, \cite{KulOrl2008} which study a model with disordered magnetic field (instead of disordered pinning potential ).
%  We also mention a series of papers by Külske \emph{et al.}
% (\cite{KulOrl2006}, \cite{EntKul2008}, \cite{KulOrl2008},
% \cite{CotKul2011}) in the spirit of effective interface models in random environment, though not random pinning but
% random magnetic field is considered there. 
For example, Külske and
Orlandi studied the following model in dimension 2
\begin{eqnarray}\label{model_random_field}
 \mu_\Lambda^{\varepsilon,(\eta)}(d\varphi)=\frac1{Z_\Lambda^{\varepsilon,(\eta)}}
\exp\left(- \frac{1}{4d}\sum_{x\sim y}V(\varphi_x-\varphi_y)
    +\sum_{x\in\Lambda}\eta_x\varphi_x\right)
\prod_{x\in\Lambda}\left(d\varphi_x+\varepsilon\delta_0(d\varphi_x) \right)
\prod_{y\in\Lambda^c}\delta_0(d\varphi_y),
\end{eqnarray}  
where  $(\eta_x)_{x\in\Lambda}$ is an arbitrary fixed
configuration of the external field and $V$ is not growing too slowly at infinity. Without disorder ($\eta\equiv0$), the interface is
localized for any $\varepsilon>0$
\cite{BolVel2001}. One could expect that in presence of disorder and at least
for very large $\varepsilon$ the interface is pinned. However, the
authors show that this is not the case : the interface diverges regardless of the pinning strength.
% an arbitrarily strong pinning
% is not sufficient to keep the interface bounded.
Thus the infinite-volume Gibbs measure for this model does
not exist. One could hope for the existence of the so-called
gradient Gibbs measure (Gibbs distributions of the increments
of the interface), which is a weaker notion. In
\cite{EntKul2008} Van Enter and Külske proved that such (infinite
volume) measures do not exist in dimension 2. 
 % This result implies the non-existence of an infinite-volume Gibbs
% measure describing a two dimensional interface in the random field
% model. In \cite{EntKul2008} the question of existence of
% \emph{gradient} Gibbs measures (Gibbs distributions of the increments
% of the interface) in infinite volume was addressed. Van Enter and
% Külske proved that such measures do not exist in the random
% field model in dimension 2. 
% Note that gradient Gibbs measures may exist, even when the
% corresponding Gibbs measure does not. This happens when the interface is locally smooth, although at large scales its
% fluctuations diverge.% This is the case for the two-dimensional Gaussian free field.
% \sout{ We do not enter more in details here, referring to the cited articles and references therein.}
  
\subsection{Open problems}
The model studied in this paper lends itself to number of extensions.
We list here a selection, with brief comments.
\begin{itemize}
\item \textbf{Path-wise~description~of~the~interface.} In the positive
free energy region, for our model, one expects localization, i.e.\ the finite variance
of $\varphi_{x}$ and exponential decay of correlations. A much more
difficult question concerns the behavior of the interface near the
critical line. Does it behave the
same as in the homogenous case (i.e.\ second order transition with
the density of pinned sites decreasing linearly for $d\geq3$, and
with a logarithmic correction for $d=2$) ? Or does the presence of
disorder have a smoothening effect on the transition (as it was proven
for certain 1-dimensional models) ?\\
In the zero free energy region, we expect the behavior similar to the entropic repulsion
for the GFF : in a box of size $n$ the interface should be repelled
at height $\pm\log n$ in $d=2$ and $\pm\sqrt{\log n}$ in $d\geq3$ (see \cite{DeuGia1999}
and references therein for details). The $\pm$ stems from the
fact that our model is symmetric with respect to reflection at zero height, hence
with probability $1/2$ it either goes upwards or downwards.

\item \textbf{Description~for~non~Gaussian~pair-potential.} A natural conjecture
is that the behavior of the model is the same if we change the
Gaussian term $(\varphi_{x}-\varphi_{y})^{2}$ to any other uniformly
convex potential $V(\varphi_{x}-\varphi_{y})$. Let us note however that the problem is difficult even in the homogenous case (due to lack of the Griffiths inequality).
% 
% Let us note that Griffiths
% inequality is not proven even in the homogenous case, which can make
% the problem difficult to handle.

\item \textbf{Non-nearest neighbors interactions.} We restricted our work to the case of the nearest
  neighbors interactions. {We suspect that the results holds true
for fast decaying interactions, at least with condition
  like in \cite[(2.1)]{BolVel2001} (which ensures a
control of the random walk's behavior in the random walk representations)}. As the behavior of the homogenous pinning model beyond this regime is not known, we are unable to pose any further conjectures. 

\item \textbf{Non i.i.d.\ environment~laws.} Going beyond the i.i.d.\ case is a
very interesting direction. Two natural cases would be the stationary
Bernoulli field or the quenched chessboard like configuration. These
questions may be closely connected to convexity/concavity properties
of the free energy function in the homogenous case. The
understanding of this case is still limited. It would be interesting
to know if finite range environment laws change the picture, as it is
sometimes the case in models with bulk disorder but it seems difficult
to answer to this question rigorously.

\item \textbf{Geometry~of~pinned~sites.} The geometry of
the pinned sites is still not fully understood in the homogenous
case. For the $d\geq3$ the law of pinned sites resembles a Bernoulli point
process. It is conjectured that once the
pinning tends to zero, under suitable
re-scaling, this field converges to Poisson point process. For $d=2$
the situation is not clear at all, since it is
  expected that the dependency
between the points will be preserved in the limit (implying the limit
being non-Poissonian). \\
Not only these questions propagate to the non-homogenous case but
also new ones arise. E.g. for our model it would
be interesting to study the joint geometry of
attractive and repulsive sites. 

\item \textbf{Models with wetting transition.} The effects of
  introducing a disorder in other models with pinning might be
  interesting, for example in models exhibiting a wetting
  phenomenon. In the case of the massless Gaussian model in $d=2$, it
  is known \cite{CapVel2000} that the wetting transition
takes place at a non-trivial point. A natural question to ask is,
if adding disorder shifts this point.

\end{itemize}

\section{Proofs}\label{sec:proofs}

\subsection{Proof of Theorem \ref{thm:free_energy}}
Although the proof is rather standard there are a few technical issues
which need to be clarified. They stem from the fact that we put only
minimal conditions of the environment. 

In the main proof it will be easier to deal
with environments with bounded support. Thus we start with a truncation
argument. Let $H>1$, we define a new environment
$\ee^H$ by $e^H_x := \tilde e_x\ind{\tilde e_x\in[-H,H]}$
with $\tilde e_x:=b e_x+h$. We claim that:
\begin{equation}
	\limsup_{H\uparrow\infty}\limsup_{\Lambda \uparrow
          \Z^d}|f^\bfq_{\Lambda} (\ee) - f^\bfq_{\Lambda} (\ee^H)|
        =0. \label{eq:freeComparison}
\end{equation}
\begin{proof}
We recall \eqref{eq:freeEnergyDef}. By a direct calculation we obtain
\begin{multline*}
	f^\bfq_{\Lambda} (\ee) - f^\bfq_{\Lambda} (\ee^H) =
        |\Lambda|^{-1} \log
        \mu_{\Lambda}^{\ee^H}\left(\exp\left(\sum_{x\in \Lambda} \tilde
            e_x\ind{\tilde e_x\in [-H,H]^c} \ind{\varphi_x \in [-a,a]}\right) \right) \\
	\leq |\Lambda|^{-1} \log \mu_{\Lambda}^{\ee^H}\left(\exp\left(\sum_{x\in \Lambda} \tilde e_x\ind{\tilde e_x\in [H,\infty)} \ind{\varphi_x \in [-a,a]}\right) \right) \leq |\Lambda|^{-1} \sum_{x\in \Lambda } \tilde e_x\ind{\tilde e_x\in [H,\infty)}.
\end{multline*}
As the size of the domain converges to infinity the last expression
converges to $\mathbb{E} \tilde e_{0} \ind{\tilde e_{0}\in [H,\infty)}$. This in turn, converges to $0$ as $H\to \infty$.
Further we observe that
\[
	f^\bfq_{\Lambda} (\ee) - f^\bfq_{\Lambda} (\ee^H) \geq
        |\Lambda|^{-1} \log
        \mu_{\Lambda}^{\ee^H}\left(\exp\left(\sum_{x\in \Lambda} \tilde
            e_x\ind{\tilde e_x\in [H,\infty)} \ind{\varphi_x \in
              [-a,a]}\right)\ind{ \varphi_x \notin [-a,a]\;\forall {x\in A}} \right),
\]
where $A = \{x\in \Lambda: \tilde e_x \leq -H\}$. Let further ${a_1, a_2, \ldots, a_N}$ be some enumeration of point in $A$ and $A_i = \{ a_1, a_2,\ldots, a_i \}$, where $i\in \{1,2,\ldots,N\}$. We write 
\begin{align}
	f^\bfq_{\Lambda} (\ee) - f^\bfq_{\Lambda} (\ee^H) &\geq
        |\Lambda|^{-1} \log \mu_{\Lambda}^{\ee^H}(\varphi_x\notin
        [-a,a]\;\forall{x\in A}) \nonumber\\
	&= |\Lambda|^{-1} \sum_{i=1}^N \log \mu_{\Lambda}^{\ee^H}(
        \varphi_{x_i}\notin [-a,a] | \varphi_x \notin [-a,a]\;\forall {x\in A_{i-1}} ). \label{eq:tmp19}
\end{align}
For any $x_i\in A$, by the spatial Markov property, we have
\begin{equation}\label{eq:gaussian_estimate}
	\mu_{\Lambda}^{\ee^H}(\varphi_{x_i}\notin [-a,a]|\varphi_x,
        x\sim x_i) = Z^{-1} \mathbf E\left(\ind{G \notin[-a,a]} e^{\tilde e_{x_i}\ind{G\in [-a,a]}}\right),
\end{equation}
where $G$ denotes a Gaussian random variable
$\mathcal{N}((\sum_{x\sim x_i } \varphi_{x})/2d, 1)$ {(which law is
denoted $\mathbf P$ and the corresponding expectation $\mathbf E$)} and $Z$ is the
normalizing constant $Z := \mathbf E\left( e^{\tilde e_{x_i}\ind{G\in [-a,a]}}\right)$. We recall that at site $x_i$ the we have
necessarily $\tilde e_{x_i}<0$ hence $Z\leq 1$. Finally, the
expression \eqref{eq:gaussian_estimate} is lower bounded by
\[
	\mathbf E\left(\ind{G \notin[-a,a]} e^{\tilde e_{x_i}\ind{G\in [-a,a]}}\right) = \mathbf{P}(G \notin[-a,a])\geq C,
\]
for certain $C>0$ independent of $(\sum_{x\sim x_i } \varphi_{x})/2d$. We use this to estimate \eqref{eq:tmp19} as follows
% 
% Let us now consider 
% 
% {To abbreviate the notation we denote $\mu^i(\cdot) :=
% \mu_{\Lambda}^{\ee}( \cdot | \varphi_x \notin [-a,a]\;\forall {x\in A_{i-1}})$. By symmetry we have 
% $\mu^i(\varphi_{x_i})=0$. We recall that at site $x_i$ the we have
% necessarily $\tilde e_{x_i}<0$ hence it is easy to see that $
% \mu^i(\varphi_{x_i}^2 | \varphi_{x}, x \sim x_i ) \geq  \E G^2
% e^{\tilde e_{x_i}\ind{G\in [-a,a]}}$, where $G$ denote a Gaussian random variable
% $\mathcal{N}((\sum_{x\sim x_i } \varphi_{x})/2d, 1)$. Further we see that there exists a constant $C>0$ (independent of $\varphi_x$,
% for $x\sim x_i$) such that $\mu^i(\varphi_{x_i}^2 | \varphi_{x}, x
% \sim x_i )\geq C$. Consequently, using the spatial Markov property,
% $\mu^i(\varphi_{x_i}^2)\geq C$. This implies}
\[
	f^\bfq_{\Lambda} (\ee) - f^\bfq_{\Lambda} (\ee^H) \geq 
        |\Lambda|^{-1} N \log C\rightarrow \mathbb{P}(\tilde e_x
        \leq -H) \cdot \log C\quad\mbox{as}\quad\Lambda\uparrow\Z^d.
\]
Alike previously this converges to $0$ as $H\to \infty$, which
concludes the proof of \eqref{eq:freeComparison}.
\end{proof} 
Further we will
assume that the law of the environment has a bounded
support. Moreover, by the well-known fact that the free energy of the Gaussian
free field exists, i.e. $\lim_{\Lambda \uparrow \Z^d} |\Lambda|^{-1}\log
Z_\Lambda^0$ exist and is finite, it is enough to prove the existence
of the limit of $\tilde f^\bfq_{\Lambda} (\ee)$ given by
\[
 \tilde f^\bfq_{\Lambda} (\ee) := |\Lambda|^{-1} \log Z_\Lambda^\ee.
\]

We prove convergence only along a sequence of boxes
$B_n=\Lambda_{2^n-1}$. The extension to the case of general sequences
is standard.
We will cut $B_n$ in $2^d$ sub-boxes denoted by $B_{n-1}^{i}$. Let $X=(\bigcup_{i=1}^{2^d}\partial
B_{n-1}^{i})\backslash\partial B_n$ be ``the border'' between the sub-boxes. 
In order to prove the
existence of the limit along $(B_n)_n$, we first derive a ``decoupling
property''.
Namely, there exists $c_n \geq 0$ such that $\sum_n
c_n<\infty$ and $$|Z^{\ee}_{B_n}-\prod_{i=1}^{2^d}Z^{\ee^{i}}_{B^{i}_{n-1}}|\leq c_n,$$
for any realization of $\ee$, where $\ee^{i}$ is the restriction
  of $\ee$ to the box $B_{n-1}^{i}$. 

The next lemma provides us with control over the concentration of the field. 
\begin{lemma}\label{lem:gaussian_tail_model} 
	There exist $C_1,C_2, C_3>0$ such that for $n$
        sufficiently large, for all $x\in \Lambda_n$ and $T>0$  we have
	 $$\mu ^{\ee}_{\Lambda_n}(|\varphi_x|>T+C_3 \log n)\leq C_1e^{-C_2 T ^2/\log n}.$$
\end{lemma}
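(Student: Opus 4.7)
The target bound $\exp(-C_2 T^2/\log n)$ with shift $C_3\log n$ is exactly the Gaussian tail associated with the variance $G_{\Lambda_n}(x,x)=O(\log n)$ of the free GFF on $\Lambda_n$ in $d=2$ (and is conservatively weak in $d\geq 3$, where the variance is $O(1)$). My plan is to transfer the standard Gaussian tail estimate for the free GFF to the disordered pinning measure, paying only a controlled price for the perturbation induced by $V:=\sum_y \tilde e_y \ind{\varphi_y\in[-a,a]}$.

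First, by an argument analogous to the truncation performed above, I would reduce to a bounded environment $|\tilde e_y|\leq M$ a.s. For the free GFF one has $\mathbb{P}^0_{\Lambda_n}(|\varphi_x|>T)\leq 2\exp(-T^2/(2G_{\Lambda_n}(x,x)))\leq 2\exp(-T^2/(C\log n))$. A direct Radon--Nikodym comparison is useless: the density $d\mu^\ee_{\Lambda_n}/d\mu^0_{\Lambda_n}$ lies in $[e^{-M|\Lambda_n|},e^{M|\Lambda_n|}]$ so the $e^{O(n^d)}$ cost swamps the Gaussian tail. To localize the comparison, I would invoke the spatial Markov property of the GFF: for a ball $B=B_r(x)$ (with $r$ to be chosen), conditioning on $\varphi|_{\partial B}$ makes $\mu^\ee_{\Lambda_n}$ restricted to $B$ a pinning model on $B$ with the conditioned boundary data, whose density w.r.t.\ the free GFF on $B$ (same BC) is bounded by $e^{2M|B|}$. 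Writing $h^\partial_x$ for the harmonic extension of $\varphi|_{\partial B}$ at $x$, this yields, for $T>|h^\partial_x|$,
\[
\mu^\ee_{\Lambda_n}\bigl(|\varphi_x|>T\,\bigm|\,\varphi|_{\partial B}\bigr)\leq 2e^{2M|B|}\exp\Bigl(-(T-|h^\partial_x|)^2/(2C\log r)\Bigr).
\]

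Choosing $r$ to be a small power of $\log n$ (or even constant) makes the $e^{O(|B|)}$ prefactor polynomial in $n$, while the $\log r$ in the denominator recovers the desired scale up to constants; the shift $C_3\log n$ absorbs both the polynomial-in-$n$ prefactor and the $|h^\partial_x|$ contribution. The remaining step is to integrate over the law of $\varphi|_{\partial B}$ under $\mu^\ee$, using an a priori variance/tail bound $\E^\ee[\varphi_y^2]=O(\log n)$ on the boundary field. The main obstacle is precisely this a priori bound for $\varphi|_{\partial B}$ under the non-Gaussian disordered measure: Brascamp--Lieb does not apply (the pinning potential is non-convex), so one must either bootstrap --- iterating the Markov decomposition over nested balls and propagating a variance estimate from small to large scales --- or import such a bound via FKG-type monotonicity and the homogeneous pinning estimates of \cite{BolVel2001}, which give $\mu^\varepsilon(\varphi_0^2)=\pi^{-1}|\log\varepsilon|+O(\log|\log\varepsilon|)$ in $d=2$ for small $\varepsilon$. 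Since the environment is bounded, the effective pinning strength at each site lies in a compact interval bounded away from $0$ and $\infty$, which is what enables such a comparison.
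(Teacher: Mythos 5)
Your strategy is genuinely different from the paper's, and you correctly flag the key obstacle --- but that obstacle is circular as stated, not merely an unsupplied lemma. The harmonic extension $h^\partial_x$ is a weighted average of $\varphi_y$ for $y\in\partial B$, and the tail of $\varphi_y$ under $\mu^\ee_{\Lambda_n}$ is exactly what the lemma asserts; reducing the claim at $x$ to an a priori bound on $\varphi|_{\partial B}$ does not terminate. Neither suggested remedy clearly closes this. The nested-ball bootstrap would need a contraction that you do not set up, and the prefactors $e^{O(|B|)}$ compound with scale, so it is far from automatic. The FKG-plus-homogeneous-pinning route fails at a more basic level: whenever $\tilde e_y<0$ the site weight $e^{\tilde e_y}-1$ is negative, the induced weight on pinned sets is not a probability measure, and there is no monotone coupling of $\mu^\ee_{\Lambda_n}$ with a homogeneous pinning measure $\mu^\varepsilon_{\Lambda_n}$. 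Boundedness of the environment does not put the effective pinning strength ``in a compact interval bounded away from $0$''; it can be repulsive, which is the whole difficulty of the disordered model.

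The paper sidesteps the circularity --- and rescues Brascamp--Lieb, which you dismiss prematurely --- by first expanding over the pinned set: $\mu^\ee_{\Lambda_n}(|\varphi_x|>T)$ is written as a convex combination over $A\subset\Lambda_n$ (with weights absorbing all the $\ee$-dependence) of $\mu^0_{\Lambda_n}(|\varphi_x|>T\mid\calA=A)$, and one bounds the latter \emph{uniformly in $A$}, eliminating the environment entirely. An FKG argument then stochastically dominates $\varphi_x$ under $\mu^0_{\Lambda_n}(\,\cdot\mid\calA=A)$ by its law under $\mu^0_{\Lambda_n}(\,\cdot\mid \varphi_y\geq a\ \forall y\in\Lambda_n)$, a Gaussian conditioned on a convex region, hence log-concave and amenable to Brascamp--Lieb. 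The only remaining input is an estimate on the mean $A_n$ of this entropically repelled field: $A_n\leq C\sqrt{\log n}$ for $d\geq3$ via \cite[Theorem~3.1]{Gia2001}, and $A_n\leq C\log n$ for $d=2$ via a Paley--Zygmund contradiction against \cite[Theorem~4]{BolDeuGia2001}. The shift $C_3\log n$ in the statement is precisely $A_n$; no Radon--Nikodym comparison between $\mu^\ee$ and $\mu^0$, local or global, is ever made.
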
    

\begin{proof} Let $\calA:=\{x : \varphi_x\in[-a,a]\}$. We first use
  the following decomposition, {which is a reordering of the
    pinning contributions over subsets of $\Lambda$}:
\equnn{
\mu ^{\ee}_{\Lambda_n}(|\varphi_x|>T)&=&\sum_{A\subset\Lambda}
{\left(\left(\prod_{x\in A}e^{be_x+h}\right)\frac{Z^0_{\Lambda_n}(\calA=A)}{Z^\ee_{\Lambda_n}}\right)}
\mu ^{0}_{\Lambda_n} (|\varphi_x|>T \;\vert\;\calA=A).
}
It is sufficient to upper-bound the rightmost term
uniformly in $A$.
Using the FKG inequality (see e.g. \cite[Section B.1]{Gia2001}) it is standard to check that 
\[
	\mu ^{0}_{\Lambda_n} (d\varphi_x\;\vert\;\calA=A)
	\prec \mu ^{0}_{\Lambda_n}(d\varphi_x \,|\, \forall y\in \Lambda_n,\;
	\varphi_y \geq a),
\]
where $\prec$ denotes the stochastic domination. Further, we intend to use the Brascamp-Lieb inequality. To this end we first estimate
\[
	A_n:=\mu ^{0}_{\Lambda_n}(\varphi_x \,|\, \forall y\in \Lambda_n ,\;\varphi_y \geq a ).
\]
For $d\geq 3$ it follows easily by \cite[Theorem 3.1]{Gia2001} that $A_n \leq C \sqrt{\log n}$ for some $C>0$. For $d=2$ let us assume first that $A_n \geq C \log n$ for some large $C$. By \cite[(B.14)]{Gia2001} we have $\mu ^{0}_{\Lambda_n}(\varphi_i^2\,|\, \forall y\in \Lambda_n ,\;\varphi_y \geq a ) \leq \log n + A_n^2$. Using the Paley-Zygmund inequality we get 
\[
	\mu ^{0}_{\Lambda_n}(\varphi_i \geq A_n/2 \,|\, \forall y\in \Lambda_n ,\;\varphi_y \geq a ) \geq \frac14\cdot \frac{A_n^2}{\log n + A_n^2} \geq 1/8,
\]
for $n$ large enough. Increasing $C$ further, if necessary, we get a contradiction with \cite[Theorem 4]{BolDeuGia2001}. Thus, $A_n \leq C \log n$. Finally by the Brascamp–Lieb inequality \cite[Section B.2]{Gia2001} we obtain
\begin{eqnarray*}
	\mu ^{0}_{\Lambda_n}(\varphi_x \geq T \,|\, \forall y\in
        \Lambda_n ,\;\varphi_y \geq a)
        \,\leq\, \exp( - \tilde C_2
        (T-A_n)^2/\log n)\,\leq\, C_1\exp(-C_2T^2/\log n)
\end{eqnarray*}
\end{proof}
Let us define $\tilde X=X\cup(\partial X\cap B_n)$, which is a thickening
of $X$ consisting of 3 ``layers''. Then, Lemma~\ref{lem:gaussian_tail_model}
allows us to control the height of the field on $\tilde X\subset B_n$. Let us fix some $\delta>0$, we have
\begin{equation}
	\label{height_on_X}
	\mu^{\ee}_{B_n}(\exists {i\in \tilde X} :|\varphi_i|>2^{\delta n})\leq
	\sum_{i\in \tilde X}\mu^{\ee}_{B_n}(|\varphi_i|>2^{\delta n}) \leq
	%C 2^{n(d-1)} e^{-C_2 2^{\delta n}} \leq
	e^{-C 2^{\delta n}},
\end{equation}
for some $C>0$. We  denote the full Hamiltonian of
our system (i.e. not only the Gaussian part) by $\mathcal{K}(\varphi)$. Using \eqref{height_on_X} we deduce
\[
	Z^{\ee}_{B_n}\leq (1+2e^{-C 2^{\delta n}})\int_{\R^{B_n}} \ind{|\varphi_x|\leq 2^{\delta
	    n} , \forall {x\in \tilde X}} e^{-\mathcal{K}(\varphi)} d\varphi.
\]
Given $\varphi$ we define $\tilde{\varphi}$ by setting $\varphi_x =0$ for $x\in X$ and $\tilde{\varphi}_x = \varphi_x$ otherwise. It is easy to check that for $\varphi$ fulfilling $|\varphi_x|\leq 2^{\delta
    n}$  for any $x\in \tilde X$ we have $|\mathcal{K}(\varphi) - \mathcal{K}(\tilde{\varphi})|\leq C|X| 2^{2\delta n} $, for some $C>0$ (here we used the fact that the support of the law of the environment is bounded). Therefore

\[
	Z^{\ee}_{B_n}\leq (1+C'e^{-C 2^{\delta n}}) e^{C|X|2^{2\delta n}}\int_{[-2^{\delta n},2^{\delta n}]^X} \prod_{i} \int_{\R^{B^{i}_n}} \ind{|\varphi_x|\leq 2^{\delta
	    n} , \forall {x\in \tilde X}} e^{-\mathcal{K}(\tilde{\varphi})} d\varphi.
\]
We notice that $\tilde \varphi$ ``enforces'' $0$ boundary conditions on $X$. Consequently, each of the inner integrals is bounded from above by $Z^{\ee^{i}}_{B_n^{i}}$. This leads to \
\[
	\tilde f^\bfq_{B_n}(\ee)\leq
	\frac1{2^d}\sum_{i=1}^{2^d} \tilde f^{\bfq}_{B^i_{n-1}}(\ee^{i}){+C_{max} 2^{n(2\delta-1)},}	
\]
for some $C_{max}\geq 0$.
Now let us prove a bound from below. We have
\[
	Z^{\ee}_{B_n}=\int_{\R^{B_n}} e^{-\mathcal{K}(\varphi)}\dd{\varphi}\geq
	\int_{\R^{B_n}} \ind{\varphi \in A} e^{-\mathcal{K}(\varphi)} \dd{\varphi},
\]
{where $A = \{\varphi: \varphi_x \in (-a,a),\forall{x\in X} \text{ and
} |\varphi_x|\leq n^3,\forall {x\in \tilde X}\}$.} Let $\varphi \in
A$ we define $\tilde{\varphi}$ by setting $\varphi_x =0$ for $x\in X$
and $\tilde{\varphi}_x = \varphi_x$ otherwise. It is easy to check
that $|\mathcal{K}(\varphi) - \mathcal{K}(\tilde{\varphi})|\leq C|X|
n^3 $, for some $C>0$ (again we use the fact that that the environment
is bounded). Therefore
\[
	Z^{\ee}_{B_n} \geq e^{-C|X| n^3} \int_{[-a,a]^X} \prod_{i}  \int_{\R^{B_n^{i}}} \ind{\varphi \in A} e^{-\mathcal{K}(\tilde{\varphi})}\dd{\varphi}.
\]
Let us consider an integral in the product. Using Lemma
\ref{lem:gaussian_tail_model} similarly as before we check that it is
bounded from below by $ \left(1 -|X| e^{-Cn}\right)
Z^{\ee^{i}}_{B_n^{i}}$. Hence
\[
	\tilde f^\bfq_{B_n}(\ee)\geq
	\frac1{2^d}\sum_{i=1}^{2^d} \tilde f^{\bfq}_{B_{n-1}^i}(\ee^{i})-C_{\min}2^{-n/2}.
\]
for some constant $C_{\min}\geq 0$. Combining our two bounds above we obtain
\equa{\label{bounds_free_energy}
C_{\min}2^{-n/2}\leq \tilde
f^\bfq_{B_n}(\ee)-\frac1{2^d}\sum_{i=1}^{2^d} \tilde f^{\bfq}_{B^i_{n-1}}(\ee^{i})\leq C_{\max}{2^{n(2\delta-1)}}.
}
By (\ref{bounds_free_energy}), it is easy to see that :
\begin{equation}
	\label{bound_expectation}
	|\E(\tilde f^\bfq_{B_n}(\ee)) -\E(
	\tilde f^{\bfq}_{B_{n-1}}(\ee))|\leq |\E(\tilde f^\bfq_{B_n}(\ee)) -\E\Big(\frac1{2^d}\sum_{i=1}^{2^d}
	\tilde f^{\bfq}_{B_{n-1}^i}(\ee^{i})\Big)|\leq
	{ 2^{ - c n} },
\end{equation}
for some $c>0$ as soon as $\delta<1/2$ and $n$ is large enough. The right hand side is thus summable hence
$\E(\tilde f^\bfq_{B_n}(\ee))$ converges as $n\rightarrow\infty$ to some limit $L\in \R$.
Using independence of environment among the boxes $B_{n-1}^{i}$,
 we can estimate the variance. Let us write
$$ \tilde f_{B_n}^{\bfq}(\ee)=2^{-d}\sum_{i=1}^{2^d} \tilde f _{B_{n-1}^i}^{\bfq}(\ee^{i})+\calE_n, $$
where $\calE_n$ is the (random) error term above. We have $|\calE_n|\leq 2^{-cn}$. Thus
\begin{eqnarray*}\Vare(\tilde
  f_{B_n}^{\bfq}(\ee))&=&\Vare\Big(2^{-d}\sum_{i=1}^{2^d} \tilde f
  _{B_{n-1}^i}^{\bfq}(\ee^{i})\Big)+\Vare(\calE_n)+
  \Cove\Big(2^{-d}\sum_{i=1}^{2^d}\tilde f _{B_{n-1}^i}^{\bfq}(\ee^{i}),\calE_n\Big)\\
&\leq &2^{-d}\Vare(\tilde f _{B_{n-1}}^{\bfq}(\ee))+\Vare(\calE_n)+
2^{-cn}\E \Big( 2^{-d}\sum_{i=1}^{2^d}\tilde f _{B_{n-1}^i}^{\bfq}(\ee^{i})\Big). 	 %\\
%	&\leq&2^{-d}\Vare(f _{B_{n-1}}^{\bfq}(\ee))+\Vare(\calE_n)+C'2^{n(2\delta-1)}{\E(f _{B_{n-1}}^{\bfq}(\ee))}.
\end{eqnarray*}
Now, since $(\E(\tilde f _{B_{n-1}}^{\bfq}(\ee)))_n$ converges, we get the following inductive upper bound on the
variance:
\[
	\Vare(\tilde f_{B_n}^{\bfq}(\ee)) \leq 2^{-d}\Vare(\tilde f _{B_{n-1}}^{\bfq}(\ee)){+2^{-2cn}+ 2L 2^{-cn}},
\]
as long as $n$ is large enough.  We deduce that for some $b\in(0,1)$
and $n$ large enough we have $\Vare(\tilde f_{B_n}^{\bfq}(\ee)) \leq
b^n$. This yields $\tilde f^\bfq_{B_n}(\ee) \rightarrow L$ both in $L^2$ and almost surely.
% We can deduce that the sequence of variances tends to zero, 
% and conclude (using (\ref{bound_expectation}) and (\ref{bound_variance})) that $f^\ee_n\to c\in\R$ in $L^1$ and $L^2$. The
% sequence converges also almost surely, since all error terms are summable.

\subsection{Proof of Fact \ref{lem:positivity_free_energy}}

The first inequality follows by the Jensen inequality. We recall definition \eqref{eq:freeEnergyDef}. We have
$$
f^{\bfq} (\ee)=\lim_{n\to\infty}n^{-d}\log\mu_{\Lambda_n}
^{0}\left(\exp\left(\sum_{x\in\Lambda_n}(b\cdot e_x+h)\ind{|\varphi_x|\leq
      a}\right)\right) \geq \lim_{n\to\infty}n^{-d}\log\mu_{\Lambda_n}
^{0}\left(\varphi_x \geq a\,,\, \forall x\in \Lambda_n \right).
$$
One easily checks that
\begin{multline*}
 \mu_{\Lambda_n}
^{0}(\varphi_x \geq a\,,\, \forall x\in \Lambda_n )\\ \geq \mu_{\Lambda_n}
^{0}(\varphi_x \geq a\,,\, \forall x\in \Lambda_{n-1} |
  \varphi_{x}\in[a,a+1]\,,\, \forall x\in \partial \Lambda_{n-1})\cdot
  \mu_{\Lambda_n}
^{0}(\varphi_x \in [a,a+1] \,,\, \forall x\in \partial \Lambda_{n-1} ).
\end{multline*}
The second factor is of order $e^{-C n^{d-1}}$
and thus is negligible. The first one is bounded from below by
$I_n=\mu_{\Lambda_{n-1}}^0(\varphi_x \geq 0\,,\,\forall x \in
\Lambda_{n-1})$. Let us fix some $\delta \in (0,1)$. By the FKG
inequality we get 
\begin{align*}
  n^{-d} \log I_n &\geq n^{-d} \log \mu_{\Lambda_{n-1}}^0(\varphi_x \geq 0\,,\,\forall x \in
  \Lambda_{\lceil \delta n \rceil}) + n^{-d} \sum_{x \in \Lambda_{n-1} \setminus \Lambda_{\lceil\delta
  n \rceil}}  \log \mu_{\Lambda_{n-1}}^0(\varphi_x \geq 0) \\
&\geq (-C)(n^{-1} + (1-\delta)^d),\quad\mbox{for some constant }C>0.
\end{align*}
The estimation of the second term follows simply
by the fact that $\mu_{\Lambda_{n-1}}^0(\varphi_x \geq 0)=1/2$,
whereas to
treat the first term we use the entropic repulsion results: \\
writing
$\Omega^+_{\Lambda_{\lceil\delta n\rceil}}=\{\varphi_x\geq0\;,\forall
x\in\Lambda_{\lceil\delta n\rceil}\}$, in dimension 2 we know that
$\mu_{\Lambda_n}^0(\Omega^+_{\Lambda_{\lceil\delta n\rceil}})=\exp(-O((\log
n)^2))$ (see \cite[Theorem 3]{BolDeuGia2001}) while in dimension 3 and higher
the above probability is of order $\exp(-O(n^{d-2}\log n))$ (see \cite[Theorem 1.1]{BolDeuZei1995}). Taking
limits $n\to\infty$ and $\delta \to 1$ we obtain
$\liminf_{n\rightarrow \infty }n^{-d}\log I_n \geq 0$, which
concludes the proof.
\qed

\subsection{Proof of Theorem \ref{thm:ineq_quenched_annealed}}\label{sec:quenchedVsAnnealed}
We recall \eqref{astrip_measure} and we  perform calculations which resemble the so-called high-temperature expansion
\begin{eqnarray}\label{pinned_sites_repr_model}
\mu_{\Lambda}^{\ee}(d\varphi)&=&\frac1{Z_{\Lambda}^{\ee}}\exp\left({-
    \calH_\Lambda(\varphi)+\sum_{x\in\Lambda}
 (b\cdot e_x+h)
  \ind{\varphi_x\in[-a,a]}}\right)\prod_{x\in\Lambda}d\varphi_x\prod_{y\in\Lambda^c}\delta_0(d\varphi_y)\\
&=&\frac1{Z_{\Lambda}^{\ee}}\exp\left(-
    \calH_\Lambda(\varphi)\right) \prod_{x\in\Lambda}\left((e^{
      b\cdot e_x+h}-1)\ind{\varphi_x\in[-a,a]}+1\right)\prod_{x\in\Lambda}d\varphi_x\prod_{y\in\Lambda^c}\delta_0(d\varphi_y)\nonumber\\
&=&\sum_{A\subset\Lambda}
\underbrace{\left(\prod_{x\in A} \left(e^{
      b\cdot e_x+h}-1\right)\frac{Z ^{0}_\Lambda(\varphi_x\in[-a,a]
  \;,\forall x\in A)}{Z ^{\ee}_{\Lambda}}\right)} _{=\nu_{\Lambda}^{\ee} (A)}
\mu ^{0}_\Lambda(d\varphi \;\vert\; \varphi_x\in[-a,a]
\;,\forall x\in A).\nonumber
\end{eqnarray}
We observe that when $b\cdot e_x+h\geq 0$ for any $x$ in the domain $\Lambda$, then $\nu_\Lambda^\ee$
is a probability measure (otherwise some weights $e^{b\cdot e_x+h}-1$
are negative).\\
 For any homogenous environment
$e_x=\varepsilon$ for all $x\in\Lambda$, it is known \cite{BolVel2001} that $\nu^\varepsilon_{\Lambda}$ is strong FKG in the sense of
\cite{FKG1971}, and that it can be stochastically majored
and minored by two Bernoulli product
measures (the precise statement needed in our work will appear later on).

% Note that if we divide and multiply the last line in
% \eqref{pinned_sites_repr_model} by
% $(e^{\ell(\ee)}-1)Z_\Lambda^{\ell(\ee)}$, and then integrate over
% $\varphi$, one gets a formula for the
% ratio of partition functions $Z_\Lambda^\ee/Z_\Lambda^{\ell(\ee)}$ :
% \begin{equation}\label{ratioZ}
% \frac{Z^\ee_\Lambda}{Z^{\ell(\ee)}_\Lambda}=\sum_{A\subset\Lambda}\prod_{x\in
% A} \frac{e^{b e_x+h}-1}{e^{\ell(\ee)}-1}\nu_\Lambda^{\ell(\ee)}(A)
% \end{equation}
By Theorem \ref{thm:free_energy}, Corollary \ref{thm:convergence_annealed} and \eqref{eq:effectiveAnnealedPotential} we have
\begin{align}\label{eq:diff_free_energies}
f^\bfq(\ee)-f^\bfa(\ee)&=
\lim_{n\to\infty} n^{-d}
\E\log\left(\frac{Z_{\Lambda_n}^{\ee}}{Z_{\Lambda_n}^{\ell(\ee)}}\right)\nonumber\\
&=\lim_{n\to\infty}
n^{-d} \E\log\mu_{\Lambda_n}^{\ell(\ee)}\left(\exp\left(\sum_{x \in \Lambda_n}(b\cdot
  e_x+h-\ell(\ee))\ind{\varphi_x\in[-a,a]}\right)\right).
\end{align}
For the rest of the proof $\mathcal{A}$ will denote the set of
``pinned points'' of a given
configuration $\varphi \in \Omega$, namely
$\mathcal{A} = \{x\in \Lambda_n: \varphi_x \in [-a,a] \}.$
 By \eqref{eq:diff_free_energies} we conclude that our goal is to prove
$$\limsup_{n\to\infty}n^{-d}\E\log\left(\mu_n^{\ell(\ee)}\left(\bgamma^\mathcal
	    A\right)\right)<0,$$ where $\mu^{\ell(\ee)}_n$ is a
        simplified notation for
        $\mu^{\ell(\ee)}_{\Lambda_n}$ and we denote
       $$\bgamma^A:=\prod_{x\in
          A}\gamma_x\quad\text{with}\quad\gamma_x:=\exp(b\cdot e_x+h-\ell(\ee)).$$

Let us now comment on the proof strategy. Let us observe that the
calculations would be simple if $\mathcal{A}$ was distributed
according to an i.i.d.\ Bernoulli($\lambda$) product measure. 
Indeed, in such a case, the above limit does not depend on n:
\begin{equation}
n^{-d}\E\log\mu_n^{\ell(\ee)}(\bgamma^\calA)=n^{-d}\E \log\left(\prod_{x\in \Lambda_n} (\lambda\gamma_x +1 -\lambda) \right) = \E \log(\lambda\gamma_0 +1 -\lambda )<0.\label{eq:bernoulliCalculations}
\end{equation}
The last inequality follows by the strict concavity of the logarithm and the Jensen inequality. However, the
interaction between the geometry of $\mathcal{A}$
and the one of the environment in $\gamma^\mathcal A$ might be potentially complicated
and hard to analyze. Exploiting the fact that the environment is
i.i.d.\ we will introduce an additional randomization. This will simplify
the problem so that only the information about the cardinality of
$\mathcal{A}$ will matter. The last trick we use is to compare the distribution of $\mathcal{A}$ under
$\mu_n^{\ell(\ee)}$ with the measure $\nu_n^{\ell(\ee)}$ defined in
\eqref{pinned_sites_repr_model}. This will enable to use the
stochastic domination results announced above and to get an expression similar to \eqref{eq:bernoulliCalculations}. Further, calculations are standard though little tiresome
since we work with general laws of environments.
                
Let us now introduce the randomization. Let $\pi$ be a permutation of
the vertices of $\Lambda_n$ chosen uniformly at random. We will
denote the corresponding expectation by $\tildeE$. It is easy to check that for any i.i.d.\ pinning law $\E\tildeE(\cdot)=\E(\cdot)$. By the Jensen inequality we have
\begin{eqnarray} \label{eq:tmp16}
	 n^{-d}\E\log\left(\mu_n^{\ell(\ee)}\left(\bgamma^\mathcal
	    A\right)\right)&=&
	n^{-d}\E\tildeE\log \mu_n^{\ell(\ee)}\left(\prod_{i\in\mathcal{A}}\gamma_{\pi(i)}\right)\\
	&\leq& n^{-d}\E\log\tildeE
        \mu_n^{\ell(\ee)}\left(\prod_{i\in\mathcal{A}} \gamma_{\pi(i)}\right)=
	n^{-d}\E\log\tildeE \mu_n^{\ell(\ee)}\left(\bgamma^{\pi(\mathcal{A})}\right),\nonumber
\end{eqnarray}
where $\pi(A)=\lbrace\pi(i):i\in A\rbrace$. Intuitively,  $\tildeE \mu_n^{\ell(\ee)}$ is the expectation of the distribution of pinned
sites ``scattered'' by a random permutation. Thanks to this we can work
with a uniformly distributed set of pinned points, provided we know
its cardinality. More precisely,
\begin{equation}
\tildeE \mu_n^{\ell(\ee)}\left(\bgamma^{\pi(\mathcal A)}\right)=\sum_{k=0}^{n^{d}}
\binom{n^d}{k}^{-1}\left(\sum_{A\subset\Lambda_n : \vert A\vert=k} \bgamma^A\right)
\mu_n^{\ell(\ee)}\left(\vert \mathcal{A}\vert=k\right). \label{eq:basicEstimate}
\end{equation}
We recall measure $\nu_n^{\ell(\ee)}$ defined in \eqref{pinned_sites_repr_model}. Paper \cite{BolVel2001} provides us with stochastic domination results which will be useful in our estimations. To this end we make the following 
elementary calculations:
\begin{eqnarray}\label{comparison_nu_tildenu}
\mu_{n}^{\ell(\ee)}(|\calA|\leq
k)
&=&\sum_{A\subset\Lambda_n}\nu_{n}^{\ell(\ee)}(A)\cdot \mu^{0}_{n}(|\mathcal{A}|\leq
k \,|\, \forall x\in A, |\varphi_{x}|\leq a)\nonumber\\
&=&\sum_{A\subset\Lambda_n,|A|\leq
  k}\nu_{n}^{\ell(\ee)}(A)\cdot\mu^{0}_{n}(|\mathcal{A}| \leq k \, |\,\forall x\in A,|\varphi_{x}|\leq a) \nonumber\\
&\leq&\sum_{A\subset\Lambda_n,|A|
\leq
  k}\nu_{n}^{\ell(\ee)}(A) =\nu_{n}^{\ell(\ee)}(|\mathcal{A}|\leq k).
\end{eqnarray}  
The next difficulty is that $\{|\mathcal{A}|=k\}$ appearing in
\eqref{eq:basicEstimate} is not an increasing event. This will be
handled differently for $d\geq 3$ and $d=2$. Let us start with the
former. 

\paragraph{Case $d\geq3$.}
By \cite[Theorem 2.4, (2.15)]{BolVel2001}, there exists some $C_1>0$
such that $\nu_n^{\ell(\ee)}$ stochastically dominates a Bernoulli
product measure, denoted 
$\mathbb B_n^\lambda$, with a specific intensity $\lambda$ depending on $a$ and
$\ell(\ee)$. More precisely,
% $$\lambda
%= C_1\cdot(1\wedge a^{-1})\cdot a (e^{\ell(\ee)}-1),
%\loren{\approx C_2\cdot(1\wedge a^{-1})\cdot a \cdot \ell(\ee) \approx C_2
%f^{\bfa}(\ee), }
%$$ 
%\loren{where the first approximation hold for $\ell(\ee)$ small enough and
%the second by \TDD{Here we need some reference}}.  
% \begin{equation}\label{stoch_dom_thm}
%  \nu_n^{\ell(\ee)} \succ \mathbb B_n^\lambda \quad \mbox{ with
%  }\quad \lambda
% = \min(C_1(1\wedge a^{-1}) a (e^{\ell(\ee)}-1), 1/2).
% \end{equation}
\begin{equation}\label{stoch_dom_thm}
 \nu_n^{\ell(\ee)} \succ \mathbb B_n^\lambda \quad \mbox{ with
 }\quad \lambda:=C_1\ell(\ee)/(1+C_1 \ell(\ee)).
\end{equation}
Below we will write $\mathbb B_n^\lambda(|\calA|=k)=b_{n,\lambda}(k)=\binom{n^{d}}{k}\lambda^{k}(1-\lambda)^{n^d-k}$. Observe that by
\cite{BolVel2001} we know that
$f^\bfa(\ee)>0$ as soon as $\ell(\ee)>0$ and consequently $\lambda>0$. As
the event 
$\lbrace\vert \calA\vert\leq k\rbrace$ is decreasing, we have the
following upper-bound, using \eqref{comparison_nu_tildenu} and \eqref{stoch_dom_thm} :
\begin{eqnarray}\label{stoch_dom}
\mu_n^{\ell(\ee)}\left(\vert
  \calA\vert=k\right)
&\leq& \mu_n^{\ell(\ee)}\left(\vert
  \calA\vert\leq k\right)
\leq \nu_n^{\ell(\ee)}\left(\vert
  \calA\vert\leq k\right)\leq \mathbb B_n^\lambda\left(\vert
  \calA\vert\leq k\right) \\
&=& \sum_{j=0}^k b_{n,\lambda}(j)=
b_{n,\lambda}(k) {\left(1+\sum_{j=0}^{k-1}\frac{b_{n,\lambda}(j)}{b_{n,\lambda}(k)}\right) }=
b_{n,\lambda}(k)\left(1+\sum_{j=0}^{k-1}\prod_{i=j}^{k-1}\frac{b_{n,\lambda}(i)}{b_{n,\lambda}(i+1)}\right),\nonumber
\end{eqnarray}
Now, for $i\leq\lfloor\lambda n^d\rfloor$,
$$\frac{b_{n,\lambda}(i)}{b_{n,\lambda}(i+1)}=\frac{i+1}{n^d-i}{\frac{1-\lambda}{\lambda}}\leq1.$$
This gives an upper bound for $k\leq \lfloor\lambda n^d\rfloor$, namely
\begin{equation}
	\mu_n^{\ell(\ee)}\left(\vert
	  \calA\vert=k\right)\leq n^d\cdot\mathbb B_n^\lambda\left(\vert
	  \calA\vert=k\right). \label{eq:intermediateEstimate}
\end{equation}    
 By Stirling's formula for  $k\in [\lceil\lambda n^d\rceil, n^d]$ we have $ b_{n,\lambda_k}(k)\geq c_2 n^{-d/2}>0$, where $\lambda_k:= k n^{-d}$. Trivially
$$\mu_n^{\ell(\ee)}\left(\vert
  \calA\vert=k\right)\leq n^{d}\cdot\mathbb B^{\lambda_k}_n\left(\vert
  \calA\vert=k\right),$$
for $n$ large enough. Using the above estimates we treat \eqref{eq:basicEstimate} as follows
\begin{eqnarray*}\tildeE \mu_n^{\ell(\ee)}\left(\bgamma^{\pi(\mathcal
      A)}\right)
% &\leq&
% \sum_{k=0}^{n^d}\binom{n^d}{k}^{-1}\left(\sum_{A\subset\Lambda_n :
%   |A|=k}\bgamma^A \right)\left[
% n^d\cdot\mathbb B_n^\lambda\left(\vert
%   \calA\vert=k\right)+\left(
% \sum_{j=\lceil\lambda n^d\rceil}^{n^d} n^{d/2}\cdot\mathbb B^{\lambda_j}_n\left(\vert
%   \calA\vert=k\right) \right)\right]\\
&\leq&n^d\sum_{k=0}^{n^d}\binom{n^d}{k}^{-1}\left(\sum_{A\subset\Lambda_n :
  |A|=k}\bgamma^A \right)\left[
\mathbb B_n^\lambda\left(\vert
  \calA\vert=k\right)+
\sum_{j=\lceil\lambda n^d\rceil}^{n^d}\mathbb B^{\lambda_j}_n\left(\vert
  \calA\vert=k\right) \right]\\
&\leq&n^{2d}\max_{\alpha\in[\lambda,1]}\sum_{k=0}^{n^d}\binom{n^d}{k}^{-1}\left(\sum_{A\subset\Lambda_n :
  |A|=k}\bgamma^A\right)\cdot\mathbb B^{\alpha}_n (|\calA|=k)\\
&\leq&  n^{2d}\max_{\alpha\in[\lambda,1]}
\mathbb B^{\alpha}_n\left(\bgamma^\mathcal A\right) = n^{2d}\max_{\alpha\in[\lambda,1]}\prod_{x\in\Lambda_n}\mathbb B^{\alpha}_n\left(\gamma_x^{\ind{x\in\calA}}\right).
\end{eqnarray*}
Hence, 
$$ n^{-d}\log\tildeE \mu_n^{\ell(\ee)}\left(\bgamma^{\pi(\mathcal
    A)}\right)\leq
\left(n^{-d}\max_{\alpha\in[\lambda,1]}\sum_{x\in\Lambda_n}\log(\alpha \gamma_x+(1-\alpha))
\right)+o(1),\quad\mbox{ as }n\to\infty.$$
We recall \eqref{eq:diff_free_energies} and \eqref{eq:tmp16}. Taking the expectation with respect to the environment we get
\begin{equation}\label{upperbound}
	f^\bfq(\ee)-f^\bfa(\ee) \leq \lim_{n\to\infty}n^{-d}\E\left[\max_{\alpha\in[\lambda,1]}\sum_{x\in\Lambda_n}\log\left(\alpha(\gamma_x-1)+1\right)\right].
\end{equation}
% \begin{eqnarray}\label{upperbound}
%   \lim_{n\to\infty}n^{-d}\E\log\tildeE \mu_n^{\ell(\ee)}\left(\bgamma^{\pi(\mathcal
%       A)}\right)&\leq&
%   \lim_{n\to\infty}n^{-d}\E\left[\max_{\alpha\in[\lambda,1]}\sum_{x\in\Lambda_n}\log\left(\alpha(\gamma_x-1)+1\right)\right].
% % \lim_{n\to\infty}n^{-d}\E\left[\Nmin_n\log(\alpha_{max}\cdot\emin/\estar+(1-\alpha_{max}))\right.
% % +\\ &&\hspace{4cm}
% % +\left.\Nmax_n\log(\alpha_{max}\cdot\emax/\estar+(1-\alpha_{max}))\right]\nonumber
% %\\
% %&=&\left(\lim_{n\to\infty}\E(\Nmin_n/n^d)\right)\log(1+\alpha_{max}(\emin/\estar-1)))
% %+\\
% %&&\hspace{3cm}\left(\lim_{n\to\infty}
% %\E(\Nmax_n/n^d)\right)\log(1+\alpha_{max}(\emax/\estar-1)))  \\
% %&=&p\log(1+\alpha_{max}(\emin/\estar-1)))
% %+(1-p)\log(1+\alpha_{max}(\emax/\estar-1))) \\
% %&<&\log\left[p(1+\alpha_{max}(\emin/\estar-1))+(1-p)(1+\alpha_{max}(\emax/\estar-1))\right]\\
% %&=&\log(1)=0
% \end{eqnarray}
Let $r,R\in\R$ such that $0<r<1<R<\infty$. The right hand side of \eqref{upperbound} is bounded from above by $I_1(r) + I_2(r,R) + I_3(R)$ where
\[
  I_1(r)=\lim_{n\to\infty}n^{-d}\E\left[\max_{\alpha\in[\lambda,1]}\sum_{x\in\Lambda_n}\log\left(\alpha(\gamma_x-1)+1\right)\ind{0\leq\gamma_x\leq
    r}\right],
\]
and $I_2, I_3$ are defined analogously by exchanging $\ind{0<\gamma_x\leq
    r}$ with $\ind{r<\gamma_x< R}$ and  $\ind{R\leq\gamma_x}$
  respectively. As $r<1$, we have $I_1(r) \leq 0$. Further as $R>1$
the term $I_3(R)$ is maximized at $\alpha=1$, consequently
$$ I_3(R) =\E\log(\gamma_0)\ind{R\leq \gamma_0}.$$
We observe that $\lim_{R\rightarrow \infty }I_3(R) =0$ and further we proceed to $I_2$. Firstly we denote 
$$ f_n(\alpha,r,R):=n^{-d}\sum_{x\in\Lambda_n}
\log(\alpha\gamma_x+1-\alpha)\ind{r<\gamma_x<R},$$
and further let $X_n(\alpha,r,R)=f_n(\alpha,r,R)-\E
  f_n(\alpha,r,R)$. That is
\[
X_n(\alpha,r,R) = n^{-d}\sum_{x\in\Lambda_n}\left[\log(\alpha\gamma_x+1-\alpha)\ind{r<\gamma_x<R}-\E \left(
\log(\alpha\gamma_x+1-\alpha) \ind{r<\gamma_x<R}\right) \right].\]
The summands are centered
independent and bounded. By Hoeffding's inequality we
get
\begin{equation} \mathbb P(|X_n(\alpha,r,R)|>t)\leq 2e^{-2t^2n^d/C_2(\alpha,r,R)^2},\label{eq:hoeffding}\end{equation}
for any $t>0$, where $ C_2(\alpha,r,R)=\log(\alpha R+1-\alpha)-\log(\alpha r+1-\alpha).$
Further we observe that there exists $C_1(\lambda,r,R)>0$ (deterministic) such that 
$$ \max_{\alpha\in[\lambda,1]}\partial_\alpha f_n(\alpha,r,R) \leq 
n^{-d} \sum_{x\in \Lambda_n} \max_{\alpha\in[\lambda,1]}\left(\frac{\gamma_x
-1}{\alpha \gamma_x+1 -\alpha} \ind{r<\gamma_x<R}\right)\leq 
C_1(\lambda,r,R).$$
This let us work with a finite number of values of $\alpha$. Let
$N \in \mathbb{N}$ and $\alpha_i:=(1-\frac iN)\lambda+\frac
iN$. We have
$$ \max_{\alpha\in[\lambda,1]}f_n(\alpha,r,R)\leq
\max_{i=0,\ldots,N}f_n\left(\alpha_i,r,R\right) + C_1(\lambda,r,R)/N.$$
Let $ \bar C_2(r,R)=\max_{\alpha\in[\lambda,1]}
C_2(\alpha,r,R)^2< \infty$. We use \eqref{eq:hoeffding} and the union
bound to get
\begin{align}\label{expectation_Xn}
\mathbb P(\max_{i=0,\ldots,N} |X_n(\alpha_i,r,R)|>t)\leq
2(N+1)e^{-2t^2n^d/\bar C_2(r,R)}.
\end{align}
Now we may write
\begin{multline*}
   I_2(r,R)= \lim_{n\to\infty}\E\max_{\alpha\in[\lambda,1]}f_n(\alpha,r,R)\leq\\
\leq \lim_{n\to\infty}\E(\max_{i=1,\ldots,N}
X_n(\alpha_i,r,R))+\lim_{n\to\infty}\max_{\alpha \in [\lambda,1]}\E
f_n(\alpha,r,R)+\frac{C_1(\lambda,r,R)}{N}.
\end{multline*}
The first term vanishes by \eqref{expectation_Xn}. Taking the limit
$N\uparrow\infty$ we have 
$$I_2(r,R)\leq \lim_{n\uparrow\infty}\max_{\alpha \in [\lambda,1]}\E
f_n(\alpha,r,R).$$
Taking now limits $r\downarrow0$, and $R\uparrow\infty$ and coming back to \eqref{upperbound}, we obtain
\begin{equation}
f^\bfq(\ee)-f^\bfa(\ee)\leq \max_{\alpha \in
	    [\lambda,1]} \E \log(\alpha \gamma_0 +1-\alpha) = \E \log(\lambda
	  \gamma_0 +1-\lambda),\label{eq:tmp20}	
\end{equation}
where the second equality will become apparent shortly.  Indeed, we denote $h(\alpha) := \E \log(\alpha \gamma_0
+1-\alpha)$. Obviously, $h'(\alpha) = \E \frac{\gamma_0 -1}{\alpha (
  \gamma_0-1)+1} \leq \frac{\E (\gamma_0-1)}{\alpha \E (\gamma_0
  -1)+1}= 0$, which follows by the Jensen inequality applied to
$x/(\alpha x +1)$ and the facts that $\gamma_0\geq0$ and
$\E\gamma_0=1$. We notice that \eqref{eq:tmp20} is precisely the bound
announced in Theorem \ref{thm:ineq_quenched_annealed} part a). The
strict inequality between the quenched and annealed free energies
($d\geq 3$) follows easily by the strict Jensen inequality and the
fact that $\E \gamma_0 =1$. 
% The final form of the estimates presented
% in theorem follows by Fact \ref{fact:freeEnergyAnnealedEsitmates}. 
% $$ f^\bfq(\ee)-f^\bfa(\ee)\leq\E \log(\lambda
%   \gamma_0 +1-\lambda) <\log(\lambda
%   \E\gamma_0 +1-\lambda)=0$$
\qed

\paragraph{Case $d=2$.}

The case $d=2$ requires some slight modification as the stochastic
domination of $\nu_n^{\ell(\ee)}$ by Bernoulli product measures holds in
a weaker sense.
Consequently, we cannot use the same  argument as in
(\ref{stoch_dom}). Indeed, by \cite[Theorem 2.4, (2.13)]{BolVel2001},
for any $\ell(\ee)>0$ and any set $B\subset \Lambda_n$
we have 
\begin{equation}\label{2d_stoch_dom}
\nu_n^{\ell(\ee)} (\mathcal A\cap B = \varnothing)\leq
(1-\lambda)^{|B|},
\end{equation}
{with some $\lambda>0$ depending on $a$ and $\ell(\ee)$, which takes
the form}
\begin{equation}\label{lambda2d}
	\lambda := C_1 \ell(\ee)|\log \ell(\ee)|^{-1/2}
% \lambda=C_1a(e^{\ell(\ee)}-1)|\log(C_2a(e^{\ell(\ee)}-1))|^{-1/2},
\end{equation}
for some $C_1>0$ when $\ell(\ee)$ is small enough.
% The precise dependency of $\lambda$ with respect to $a$ and
% $\ell(\ee)$ is known only for $a$ small and $a(e^{\ell(\ee)}-1)$
% small. In this case we have
% \begin{equation}\label{lambda2d}
% \lambda=C_1a(e^{\ell(\ee)}-1)|\log(C_2a(e^{\ell(\ee)}-1))|^{-1/2},
% \end{equation}
% for some $C_1,C_2>0$.
Now, using \eqref{comparison_nu_tildenu} and \eqref{2d_stoch_dom} we have
\equnn{  
\mu_n^{\ell(\ee)} (\vert\mathcal A\vert=k)&\leq&
\mu_n^{\ell(\ee)} (\vert\mathcal A\vert\leq k)\leq
\nu_n^{\ell(\ee)} (\vert\mathcal A\vert\leq k)
\\
%\leq\nu_n^{\ell(\ee)} (\vert\mathcal A\vert\leq k)\\
&=&
\nu_n^{\ell(\ee)} (\exists B\subset\Lambda_n : \vert B\vert=n^d-k\mbox{
  and }
\mathcal A\cap B=\varnothing)\\
&\leq&
\sum_{B\subset\Lambda_n : |B|=n^d-k}\nu_n^{\ell(\ee)} (\mathcal A\cap
B=\varnothing)\leq
\binom{n^d}{k}(1-\lambda)^{n^d-k}.
}
By a direct calculations one checks that for $\tilde{\lambda} := \frac{\lambda}{|\log \lambda|}$ we have
$$ \binom{n^d}{k}(1-\lambda)^{n^d-k}\leq \B_n^{\tilde\lambda}(|\calA|=k),\quad \mbox{
   for } k\leq \lfloor \tilde\lambda n^d\rfloor.$$
Further one may process as for $d\geq3$ by using $\tilde\lambda$
instead of $\lambda$. Consequently we deduce
$$
f^\bfq(\ee)-f^\bfa(\ee)\leq\E\log\left(\frac{\lambda}{|\log\lambda|}\gamma_0+1-\frac{\lambda}{|\log\lambda|}\right).$$
This finishes the proof for $d=2$.
\qed

\paragraph{Acknowledgements.} We would like to warmly acknowledge Yvan
Velenik for introducing us to the topic, and for 
many useful discussions. L.C.\ thanks Fabio Toninelli for his advice.\\
L.C.\ was partially supported by the Swiss National
Foundation. P.M.\ was supported by a Sciex Fellowship grant no.\
10.044.\\
We also thank the reviewer of the first version of this paper for many useful comments.

\bibliographystyle{abbrv}
\bibliography{biblio_short}

\end{document}